\newcommand{\proof}{{\it Proof~}}
\newtheorem{thm}{Theorem}[section]
\newtheorem{lem}[thm]{Lemma}
\newtheorem{defn}[thm]{Definition}
\numberwithin{equation}{section}
\begin{document}

\centerline{{\bf  \Large
CR-Lightlike Submanifolds of a Golden Semi-}} 
\centerline{{\bf  \Large Riemannian Manifold}}

\vspace*{.2cm}

\begin{center}

{Mobin Ahmad} \\
Department of Mathematics and Statistics \\
Faculty of Science,
Integral University, \\
Lucknow-226026, India \\
E-mail: \small{mobinahmad68@gmail.com}\\ 
\end{center}

\begin{center}
 {Mohammad Aamir Qayyoom}\\	
 Department of Mathematics and Statistics \\
 Faculty of Science,
 Integral University, \\
 Lucknow-226026, India \\	
 E-mail: \small{aamir.qayyoom@gmail.com}
\end{center}
\vspace*{.2cm}

\begin{abstract}
In this paper, we define and study CR-lightlike submanifolds of a golden semi-Riemannian manifold. We investigate some properties of geodesic CR-submanifolds of a golden semi-Riemannian manifold. Moreover, we obtain many interesting results for totally geodesic and totally umbilical CR-submanifolds on a golden Riemannian manifold. A non-trivial example of CR-lightlike submanifold of a golden semi-Riemannian manifold is also constructed.
\end{abstract}

\noindent
{\bf Mathematics Subject Classification (2010)}: 53C15, 53C40, 53C20.

\noindent
{\bf Keywords and Phrases}: golden structure, semi-Riemannian manifold, CR- lightlike submanifolds, totally umbilical, geodesic.

\section{Introduction}

\indent 

The theory of submanifolds of a manifold is one of the most interesting topics in differential geometry. According to the behaviour of the tangent bundle of a
submanifold, we have three classes of submanifolds: holomorphic submanifolds, totally real submanifolds and CR-submanifolds has been introduced by the author in \cite{bdecby1978}. 
Let $ \overline{M} $ be an almost Hermitian manifold and let $J$ be the almost complex structure of $\overline{M}.$ A real submanifold $M$ of $\overline{M}$ is called a CR submanifold if there exists a differential distribution $D$ on $M$ satisfying \\
$$(i) J(D_{x}) = D_{x}  $$ and
$$(ii) J(D_{x}^{\bot}) \subset T_{x}M^{\bot}  $$
for each $x \in M,$ where $D^{\bot}$ is the complementary orthogonal distribution to $D$ and $T_{x}M^{\bot}$ is normal space to $M$ at $x.$ Holomorphic submanifold and totally real submanifolds are particular cases of CR-submanifolds.\\
Later, the CR-submanifolds have been extensively studied by several geometers  ( see in \cite{amakk2013}, \cite{amak2013}, \cite{bakmyk1981}, \cite{beja1981}, \cite{beja1986}  \cite{chen1981}). Also, Some properties of CR-submanifold have been investigated in various studies (see \cite{amhajb2014}, \cite{amoj2010}, \cite{ahmd2012}, \cite{amkk2016}, \cite{amak2013}, \cite{aaik2013}, \cite{ammdsr2012}, \cite{dhsi1986}, \cite{dlam2009}, \cite{krgmnrk2010}, \cite{smdmaoj2014}, \cite{zksa2016}).\\
\indent
Bejancu and Duggal \cite{dklba1998} and Kupeli \cite{kudn1996} developed the general theory of lightlike submanifolds of a semi-Riemannian manifold. In \cite{beja1995}, the authors constructed the principal vector bundles to  lightlike submanifolds which has a non-trivial intersection with the tangent bundle of lightlike submanifolds in a  semi-Riemann manifold and obtained Gauss-Weingarten formulae as well as other properties of this submanifold. Duggal and Bejancu \cite{dklba1993} studied lightlike hypersurfaces of indefinite Kaehler manifolds. Duggal and Sahin \cite{dklsb2007} studied lightlike submanifolds of indefinite Sasakian manifolds. Acet et. al \cite{acpk2014} studied lightlike submanifolds of a para-Sasakian manifold. Sahin and Gunes \cite{sbgr2001} studied the geodesic CR-lightlike submanifolds in a Kaehler manifold. They also studied the integrability of distributions in CR-lightlike submanifolds in \cite{sbgr2002}. Gonga et. al \cite{gmkn2010} studied totally umbilical CR-lightlike submanifolds of indefinite Kaehler manifolds.

\indent
On the other hand, Crasmareanu and Hretcanu \cite{cmhc2008} constructed the golden structure on a differentiable manifold $(\overline{M}, g)$ as a particular case of polynomial structure \cite{goldyk1970}. Gezer et. al investigated the integrability conditions of golden Riemannian structure. The golden structure was also studied in various studies (see \cite{efsrua2017}, \cite{gacnsa2013}, \cite{gheb2018}, \cite{goke2019}, \cite{qmama2022}). Hretcanu C.E. \cite{hrce2007} studied submanifolds in Riemannian manifold with golden structure. The authors studied some properties of submanifoldsof Riemannian manifolds with golden structure in \cite{ahqa2019}. They also studied hypersurfaces of a metallic Riemannian manifold in \cite{ahqa2020}.\\
\indent
The growing importance of lightlike submanifolds in mathematical physics, in particular, their use in relativity and many more, motivated the authors to study lightlike submanifolds extensively. In this paper, we study CR-lightlike submanifold of a golden semi-Riemannian manifold. The paper is organized as follows:\\
In section 2, we define a golden structure manifold and CR-lightlike submanifolds of a golden Riemannian manifold. In section 3, we establish several properties of geodesic CR-lightlike submanifolds on a golden Riemannian manifold. In section 4, we investigate some properties of totally umbilical CR-lightlike submanifolds. In the last section, we give a non-trivial example of CR-submanifolds of a golden semi-Riemannian manifold.

\section{Definition and preliminaries}
Let $ ( \overline{M}, {g} ) $ is a $ (m + n)-$dimensional semi-Riemannian manifold, $m, n \ge 1$ and $ {g} $ be a semi-Riemannian metric on $\overline{M}.$ We denote by $q$ the constant index of ${g}$ and we suppose that $\overline{M}$ is not Riemannian manifold. \\
Let $M$ be a lightlike submanifold of dimension $m$ of $\overline{M}.$ In this case there exists a smooth distribution on $M,$ named a radical distribution $ Rad(TM)$ such that $ Rad(TM) = TM\cap TM^{\bot},$ $\forall p \in M.$ If the rank of $RadTM$ is $r \ge 0,$ $M$ is called an $r-$lightlike submanifold of $\overline{M}.$ Then there arise four cases:\\
I: $ 0 < r < min \{ m, n\}; $ the submanifold is called an $r-$lightlike submanifold. \\
II: $ 1 < r = n < m; $  The submanifold is called a coisotropic submanifold.\\
III: $ 1 < r = m < n; $ The submanifold is called a isotropic submanifold.\\
IV: $ 1 < r = n = m; $ The submanifold is called totally lightlike submanifold.\\
Let $M$ be an $r-$lightlike submanifold of $\overline{M}.$ We consider the complementary distribution $S(TM)$ of $Rad(TM)$ on $TM$ which is called a screen distribution. Then we have the direct orthogonal sum
\begin{equation}\label{1e1}
\nonumber TM = RadTM \bot S(TM).
\end{equation}
As, $S(TM)$ is a non-degenerate vector subbundle of $T\overline{M}|_{M},$ we put
\begin{equation}\label{1e2}
\nonumber T\overline{M}|_{M} = S(TM) \bot S(TM)^{\bot},
\end{equation}
 where $S(TM)^{\bot}$ is the complementary orthogonal vector subbundle of $S(TM)$ in $T\overline{M}|_{M}.$ \\
 $S(TM), S(TM^{\bot})$ are non-degenerate, we have the following orthogonal direct decomposition
 \begin{equation}\label{1e3}
\nonumber  S(TM)^{\bot} = S(TM^{\bot}) \bot S(TM^{\bot})^{\bot}.
 \end{equation}

 \begin{thm}\label{the1}\cite{dklba1998}
 Let $(M, g, S(TM), S(TM)^{\bot})$ be an $r-$lightlike submanifold of a semi-Riemannian manifold $(\overline{M}, {g}).$ Then there exists a complementary vector bundle $ltr(TM)$ called a lightlike transversal bundle  of $Rad(TM)$ in $S(TM^{\bot})^{\bot}$ and the basis of $\Gamma (ltr(TM)|_{V})$ consists of smooth sections $\{ N_{1},....., N_{r} \}$ of $S(TM^{\bot})^{\bot}|_{V}$ such that
 $$ {g}(N_{i}, \xi_{j}) = \delta_{ij}, \; {g}(N_{i}, N_{j}) = 0, \; i,j = 0, 1, 2,,,, r, $$
 where $\{ \xi_{1}, ....., \xi_{r} \}$ is a basis of $\Gamma (RadTM)|_{U}.$
 \end{thm}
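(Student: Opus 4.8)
The plan is to reduce the assertion to a pointwise statement in linear algebra and then to assemble the pointwise data into a smooth subbundle. First I would pin down where $Rad(TM)$ sits: since $Rad(TM)\subset TM^{\bot}$ and $S(TM^{\bot})$ is a screen of $TM^{\bot}$, we have $Rad(TM)\,\bot\,S(TM^{\bot})$, and by definition $Rad(TM)\,\bot\,S(TM)$; hence $Rad(TM)\subset S(TM^{\bot})^{\bot}$, the orthogonal complement of $S(TM^{\bot})$ inside $S(TM)^{\bot}$. Next I would check that $S(TM^{\bot})^{\bot}$ is a non-degenerate vector bundle of rank $2r$: non-degeneracy follows from the orthogonal decomposition $S(TM)^{\bot}=S(TM^{\bot})\,\bot\,S(TM^{\bot})^{\bot}$ together with the non-degeneracy of $S(TM)^{\bot}$ and of $S(TM^{\bot})$, while the rank is $(n+r)-(n-r)=2r$ by the usual count. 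Thus $Rad(TM)$ is a totally lightlike subbundle of $S(TM^{\bot})^{\bot}$ of exactly half its rank, hence maximal isotropic; in particular, for any vector subbundle $C$ complementary to $Rad(TM)$ in $S(TM^{\bot})^{\bot}$, the restriction of $g$ gives a fibrewise non-degenerate pairing $C\times Rad(TM)\to\mathbb{R}$ --- for if some $\sum_i a_i\xi_i$ were $g$-orthogonal to $C$, it would be orthogonal to all of $S(TM^{\bot})^{\bot}$ and also to $S(TM)$ and to $S(TM^{\bot})$, hence lie in the radical of the non-degenerate bundle $T\overline{M}|_M$, forcing $\sum_i a_i\xi_i=0$.

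The construction itself is then a Gram--Schmidt type correction. Working on a neighbourhood $V$ on which $Rad(TM)$ is framed by $\{\xi_1,\dots,\xi_r\}$, I would pick smooth sections $W_1,\dots,W_r$ of $S(TM^{\bot})^{\bot}|_V$ completing the $\xi_i$ to a local frame; by the non-degeneracy above the matrix $[g(\xi_i,W_j)]$ is invertible over $V$, so after replacing the $W_j$ by an appropriate $C^{\infty}(V)$-linear combination $W'_j$ we may assume $g(\xi_i,W'_j)=\delta_{ij}$. Setting $N_j=W'_j-\tfrac12\sum_k g(W'_j,W'_k)\,\xi_k$, a short computation that uses only $g(\xi_i,\xi_j)=0$ and $g(\xi_i,W'_j)=\delta_{ij}$ yields $g(N_i,\xi_j)=\delta_{ij}$ and $g(N_i,N_j)=0$. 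The $N_j$ are linearly independent and, together with the $\xi_i$, still frame $S(TM^{\bot})^{\bot}|_V$, so they span a rank-$r$ subbundle complementary to $Rad(TM)$ in $S(TM^{\bot})^{\bot}$ over $V$ with the stated bases.

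For a clean global statement I would avoid patching the local models by a partition of unity and instead build the bundle intrinsically: fix any smooth complement $C$ of $Rad(TM)$ in $S(TM^{\bot})^{\bot}$ (it exists since $M$ is paracompact), use the fibrewise non-degenerate pairing to define a bundle map $\phi\colon C\to Rad(TM)$ by $g(N,\phi(N'))=-\tfrac12\,g(N,N')$ for all $N,N'\in C$, and put $ltr(TM):=(\mathrm{id}+\phi)(C)$. One then verifies that $ltr(TM)$ is a smooth rank-$r$ subbundle, totally lightlike, complementary to $Rad(TM)$ in $S(TM^{\bot})^{\bot}$, and still non-degenerately paired with $Rad(TM)$; hence on each $V$ the required sections $\{N_1,\dots,N_r\}$ are simply the $g$-dual frame of $\{\xi_1,\dots,\xi_r\}$ inside $ltr(TM)|_V$, automatically satisfying $g(N_i,\xi_j)=\delta_{ij}$ and $g(N_i,N_j)=0$. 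The routine part is the index bookkeeping in the two short computations; the genuinely delicate point is the globalization --- turning the pointwise correction into a smooth subbundle --- which the bundle-map formulation disposes of in one stroke. This is exactly the content of the statement, due to Duggal and Bejancu \cite{dklba1998}, and the argument above is the one underlying it.
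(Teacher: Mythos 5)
This theorem is stated in the paper without proof, being quoted from Duggal--Bejancu \cite{dklba1998}, so there is no in-paper argument to compare against; your proof is correct and is essentially the standard construction from that reference: identify $Rad(TM)$ as a maximal isotropic subbundle of the non-degenerate rank-$2r$ bundle $S(TM^{\bot})^{\bot}$, invert the non-singular matrix $[g(\xi_i,W_j)]$ to obtain a local frame dual to $\{\xi_i\}$, and remove the self-pairings with the isotropic correction $N_j=W'_j-\tfrac12\sum_k g(W'_j,W'_k)\xi_k$. The two short index computations check out, and your intrinsic bundle-map formulation of $ltr(TM)=(\mathrm{id}+\phi)(C)$ is a clean way to make the local construction globally well defined.
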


 We consider the vector bundle
 \begin{equation}\label{1e4}
 \nonumber tr(TM) = ltr(TM) \bot S(TM^{\bot}).
 \end{equation}
 Thus,
 \begin{equation}\label{1e5}
\nonumber  T\overline{M} = TM \oplus tr(TM) = S(TM) \bot S(TM^{\bot}) \bot (Rad(TM)\oplus ltr(TM).
 \end{equation}
Let $\overline{\nabla}$ be the Levi-Civita connection on $\overline{M},$ we have
\begin{equation}\label{1e5a}
\overline{\nabla}_{X}Y = \nabla_{X}Y + h(X,Y),\; \forall X,Y \in \Gamma (TM) 
\end{equation} and
\begin{equation}\label{1e5b}
\overline{\nabla}_{X}V = - A_{V}X + \nabla_{X}^{\bot}V, \; \forall X \in \Gamma (TM)\; and \; V \in \Gamma (tr(TM)),
\end{equation} 
where $\{ \nabla_{X}Y, A_{V}X \}$ and $\{ h(X,Y), \nabla_{X}^{\bot}V \}$ belongs to $\Gamma (TM)$ 
and $\Gamma (tr(TM)),$ respectively. \\ 
Using projection $L : tr(TM) \rightarrow ltr(TM),$ and $S: tr(TM) \rightarrow S(TM^{\bot}),$  we have
\begin{equation}\label{1e6}
\overline{\nabla}_{X}Y = \nabla_{X}Y + h^{l}(X,Y) + h^{s}(X,Y),
\end{equation}
\begin{equation}\label{1e7}
\overline{\nabla}_{X}N = - A_{N}X + \nabla_{X}^{l}N + D^{s}(X,N),
\end{equation}
and
\begin{equation}\label{1e8}
\overline{\nabla}_{X} W = - A_{W}X + \nabla_{X}^{s} + D^{l}(X,W)
\end{equation}
for any $X, Y \in \Gamma (TM), \; N \in \Gamma (ltr(TM))$ and $W \in \Gamma (S(TM^{\bot})),$ \\
where $h^{l}(X,Y) = Lh(X,Y), \; h^{s}(X,Y) = Sh(X,Y), \; \nabla_{X}^{l}N, \; D^{l}(X,W) \in \Gamma (ltr(TM)), \; \nabla_{X}^{s}\; D^{s}(X,N) \in \Gamma (S(TM^{\bot})) $ and $\nabla_{X}Y, \; A_{N}X, \; A_{W}X \in \Gamma (TM).$\\
 $P$ denotes the projection morphism of $TM$ to the screen distribution, we consider the distribution
\begin{equation}\label{1e9}
\nonumber \nabla_{X}PY = \nabla_{X}^{*}PY + h^{*}(X,PY),
\end{equation}
\begin{equation}\label{1e10}
\nabla_{X}\xi = - A_{\xi}^{*}X + \nabla_{X}^{*t}\xi,
\end{equation}
where $X, Y \in \Gamma(TM), \; \xi \in \Gamma(Rad(TM)). $  \\
Then, we have the following equations  \\
\begin{equation}\label{1e10a}
\nonumber {g}(h^{l}(X, PY), \xi) = g(A_{\xi}^{*}X, PY),
\end{equation}
\begin{equation}\label{1e10b}
	{g}(h^{*}(X,PY), N) = g(A_{N}X, PY),
\end{equation}
\begin{equation}\label{1e10c}
\nonumber g(A_{\xi}^{*}PX, PY) = g(PX, A_{\xi^{*}}PY ),
\end{equation}
\begin{equation}\label{1e10d}
\nonumber  A^{*}_{\xi}\xi = 0.
\end{equation}
Consider that $\overline{\nabla}$ is a metric connection, we have 
\begin{equation}\label{1e10e}
(\nabla_{X}{g})(Y,Z) = {g}(h^{l}(X,Y), Z)+ {g}(h^{l}(X,YZ), Y).
\end{equation}
Using the properties of linear connection, we have
\begin{equation}\label{1e10f}
(\nabla_{X}h^{l})(Y,Z)=\nabla_{X}^{l}(h^{l}(Y,Z))- h^{l}(\overline{\nabla}_{X}Y,Z)-h^{l}(Y,\overline{\nabla}_{X}Z),
\end{equation}
\begin{equation}\label{1e10g}
	(\nabla_{X}h^{s})(Y,Z)=\nabla_{X}^{s}(h^{s}(Y,Z))- h^{s}(\overline{\nabla}_{X}Y,Z)-h^{s}(Y,\overline{\nabla}_{X}Z).
\end{equation}
\begin{defn}\label{de1}
Let $(\overline{M}, {J}, {g})$ be a real m-dimensional golden semi-Riemannian manifold and $M$ be a real n-dimensional submanifold of $\overline{M},$ that is
\begin{equation}\label{1e16a}
{J}^{2} = {J} + I, 
\end{equation}  
where ${J}$ is a non-null tensor of type $(1,1)$ and metric ${g}$ is ${J}-$compatible, $i.e$ 
\begin{equation}\label{1e16b}
{g} ({J}X, Y) = {g}(X, {J}X)
\end{equation}
then, $\overline{M}$ is called a locally golden manifold if $J$ is parallel with respect to $\overline{\nabla}$,  i.e. $\overline{\nabla}_{X}J = 0$ $\forall X,Y \in \Gamma (T\overline{M}).$
\end{defn}
\begin{defn}\label{de2}
A submanifold $M$ of a golden semi-Riemannian manifold $\overline{M}$ is said to be a CR-lightlike submanifold if the following two conditions are fulfilled: \\
(i) ${J}(Rad(TM))$ is a distribution on $M$ such that
$$ Rad(TM) \cap {J} Rad(TM) = \{ 0 \}. $$
(ii) There exist vector bundles $S(TM), \; S(TM^{\bot}), \; ltr(TM), \; D_{0} \; and \; D' $ over $M$ such that
$$ S(TM) = \{ {J}(RadTM) \oplus D'\}\bot D_{0}, \; {J}D_{0} = D_{0}, \; {J}D' = L_{1}\bot L_{2}, $$
where $D_{0}$ is a non-degenerate distribution on $M$ and $L_{1},$ $L_{2},$ are vector bundles of $ltr(TM)$ and $S(TM^{\bot})$ respectively.
\end{defn}

From the definition of CR-lightlike submanifold, we have
$$ TM = D \oplus D', $$
where $D = Rad(TM) \bot {J} Rad(TM) \bot D_{0}.$ \\
We denote by $S$ and $Q$ the projection on $D$ and $D',$ respectively. Then we have
\begin{equation}\label{1e11}
\nonumber {J}X  = fX + wX
\end{equation}
for any $X, Y \in \Gamma (TM),$ where $fX = {J}SX$ and $wX = {J}QX.$\\
On the other hand, we have
\begin{equation}\label{1e12}
\nonumber {J}V = BV + CV
\end{equation}
for any $V \in \Gamma (tr(TM)),$ where $BV \in \Gamma (TM) $ and $CV \in \Gamma (tr(TM)).$ Unless $M_{1}$ and $M_{2}$ are supposed to as ${J}L_{1}$ and ${J}L_{2},$ respectively.
\begin{lem}\label{2l4}
Let $M$ be a CR-lightlike submanifold of golden semi-Riemannian manifold and screen distribution be totally geodesic. Then $\nabla_{X}Y \in \Gamma (S(TM)),$ where $X, Y \in \Gamma (S(TM)).$ 
\end{lem}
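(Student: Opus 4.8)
The plan is to work in the orthogonal decomposition $TM = S(TM)\,\bot\,Rad(TM)$ and show that for $X,Y\in\Gamma(S(TM))$ the component of $\nabla_{X}Y$ along $Rad(TM)$ vanishes. By Theorem \ref{the1} the sections $\{N_{1},\dots,N_{r}\}$ of $ltr(TM)$ are dual to the basis $\{\xi_{1},\dots,\xi_{r}\}$ of $Rad(TM)$, so the radical part of any $Z\in\Gamma(TM)$ is $\sum_{i}g(Z,N_{i})\xi_{i}$. Hence it suffices to prove that $g(\nabla_{X}Y,N)=0$ for every $N\in\Gamma(ltr(TM))$.

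The shortest route is to observe that, since $Y\in\Gamma(S(TM))$ we have $PY=Y$, so \eqref{1e9} reads $\nabla_{X}Y=\nabla^{*}_{X}Y+h^{*}(X,Y)$ with $\nabla^{*}_{X}Y\in\Gamma(S(TM))$ and $h^{*}(X,Y)\in\Gamma(Rad(TM))$; the hypothesis that the screen distribution is totally geodesic means precisely $h^{*}(X,Y)=0$ for $X,Y\in\Gamma(S(TM))$, and the claim follows at once. I would, however, also present the computation that makes this transparent from the structure equations: starting from the Gauss formula \eqref{1e6}, and using that $h^{l}(X,Y)\in ltr(TM)$ and $h^{s}(X,Y)\in S(TM^{\bot})$ are both $g$-orthogonal to $N$ (by $g(N_{i},N_{j})=0$ and $ltr(TM)\subset S(TM^{\bot})^{\bot}$), one gets $g(\nabla_{X}Y,N)=g(\overline{\nabla}_{X}Y,N)$. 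Since $\overline{\nabla}$ is metric and $g(Y,N)=0$ (as $Y\in S(TM)$ while $N\in S(TM^{\bot})^{\bot}$), this equals $-g(Y,\overline{\nabla}_{X}N)$; feeding in the Weingarten-type formula \eqref{1e7} and discarding the terms $\nabla^{l}_{X}N\in ltr(TM)$ and $D^{s}(X,N)\in S(TM^{\bot})$, which are orthogonal to $Y$, leaves $g(\nabla_{X}Y,N)=g(A_{N}X,Y)$.

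Finally, because $Y=PY$, relation \eqref{1e10b} gives $g(A_{N}X,Y)=g(A_{N}X,PY)=g(h^{*}(X,PY),N)$, and the totally geodesic screen hypothesis makes the right-hand side zero. Thus $g(\nabla_{X}Y,N)=0$ for all $N\in\Gamma(ltr(TM))$, so $\nabla_{X}Y$ has no radical component, i.e. $\nabla_{X}Y\in\Gamma(S(TM))$. I do not expect a genuine obstacle here: the only care needed is the bookkeeping of the mutual orthogonality relations among $S(TM)$, $Rad(TM)$, $ltr(TM)$ and $S(TM^{\bot})$, together with the point that one must pass through the Levi-Civita connection $\overline{\nabla}$ (which is metric) rather than the induced connection $\nabla$ on $M$ (which in general is not), and the correct reading of ``screen distribution totally geodesic'' as the vanishing of $h^{*}$ on $S(TM)$.
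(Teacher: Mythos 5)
Your proposal is correct and its main computation is essentially the paper's own proof: pass to $g(\overline{\nabla}_{X}Y,N)=-g(Y,\overline{\nabla}_{X}N)$ via the metric property, apply the Weingarten formula \eqref{1e7} to reduce to $g(A_{N}X,Y)$, convert this to $g(h^{*}(X,PY),N)$ by \eqref{1e10b}, and invoke the totally geodesic screen hypothesis. Your additional ``shortest route'' through the screen decomposition $\nabla_{X}PY=\nabla^{*}_{X}PY+h^{*}(X,PY)$, with $h^{*}$ valued in $Rad(TM)$, is a valid and even more direct way to reach the same conclusion.
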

\begin{proof}
For any $X, Y \in \Gamma (S(TM)),$ \\
\begin{eqnarray}
\nonumber {g}(\nabla_{X}Y, N) & = & {g} (\overline{\nabla}_{X}Y - h(X,Y), N)\\
\nonumber & = & {g} (\overline{\nabla}_{X}Y, N) - {g}(h(X,Y), N)\\
\nonumber & = & {g} (\overline{\nabla}_{X}Y, N)\\
\nonumber & = & - {g} (Y,\overline{\nabla}_{X} N).
\end{eqnarray}
Using (\ref{1e7}), we have
\begin{eqnarray}
\nonumber {g}(\nabla_{X}Y, N) & = & - {g}(Y, -A_{N}X + \nabla_{X}^{\bot}N) \\
\nonumber & = & {g}(Y, A_{N}X) - {g}(Y, \nabla_{X}^{\bot}N)\\
\nonumber & = & {g}(Y, A_{N}X).
\end{eqnarray}
Using (\ref{1e10b}), we get 
$$ {g}(\nabla_{X}Y, N) = {g}(h^{*}(X,Y), N).  $$ 
Since screen distribution is totally geodesic, then $h^{*}(X,Y) = 0,$
$$ {g} (\overline{\nabla}_{X}Y, N) = 0. $$
Using Theorem \ref{the1}, we have \\
$$ \nabla_{X}Y \in \Gamma (S(TM)), $$
where $X, Y \in \Gamma (S(TM)). $
\end{proof}

\begin{lem}
Let $M$ be a CR-lightlike submanifold a locally golden semi-Riemannian manifold $\overline{M}.$ 
Then $\nabla_{X}JX = J\nabla_{X}X$ for any $X \in \Gamma (D_{0}).$
\end{lem}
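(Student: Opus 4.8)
The statement claims that for $X\in\Gamma(D_0)$ we have $\nabla_X JX = J\nabla_X X$. Since $D_0$ is $J$-invariant ($JD_0=D_0$), $JX$ is again a section of $D_0\subset TM$, so $\nabla_X JX$ makes sense as a tangential object. The natural route is to start from the parallelism of $J$ on $\overline M$, i.e. $\overline\nabla_X(JX)=J\,\overline\nabla_X X$ (which is the meaning of $\overline\nabla J=0$ in Definition \ref{de1}), and then push this identity through the Gauss formula (\ref{1e6}), comparing tangential and transversal components on both sides.

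First I would expand the left-hand side using (\ref{1e6}) with $Y=JX$:
\[
\overline\nabla_X (JX)=\nabla_X(JX)+h^l(X,JX)+h^s(X,JX).
\]
Then I would expand the right-hand side: $\overline\nabla_X X=\nabla_X X+h^l(X,X)+h^s(X,X)$, and apply $J$. Writing $J(\nabla_X X)=f\nabla_X X+w\nabla_X X$ via the decomposition $JZ=fZ+wZ$, and $J(h^l(X,X))=B h^l(X,X)+C h^l(X,X)$, $J(h^s(X,X))=B h^s(X,X)+C h^s(X,X)$ via $JV=BV+CV$, I would collect the $\Gamma(TM)$-components and the $\Gamma(tr(TM))$-components separately. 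The tangential part of the identity then reads
\[
\nabla_X(JX)=f\nabla_X X + B h^l(X,X) + B h^s(X,X).
\]
So the real content is to show that for $X\in\Gamma(D_0)$ the correction terms vanish: $\nabla_X X$ has no $D'$-component (so $f\nabla_X X=J\nabla_X X$), and $B h^l(X,X)=B h^s(X,X)=0$.

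The main obstacle — and the place I would spend the most care — is establishing those vanishing claims, which presumably rest on earlier structural lemmas about $D_0$ (for a genuinely "locally golden" ambient manifold $D_0$ should be parallel, or at least $\overline\nabla_X X$ stays in $D$ for $X\in D_0$, and the second fundamental forms restricted to $D_0$ should land in pieces annihilated by $B$). Concretely, I would (i) show $g(\nabla_X X,\text{—})$ kills test sections dual to $D'$ by the same Gauss/Weingarten manipulation used in Lemma \ref{2l4}, using that $J$ preserves $D_0$ and $g$ is $J$-compatible (\ref{1e16b}); and (ii) argue $h^l(X,X),h^s(X,X)$ lie in the sub-bundles $L_1,L_2$ complementary to the images $JL_1=M_1$, $JL_2=M_2$, so that applying $B$ returns zero. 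Once both corrections are gone, comparing with the expansion of the left side gives $\nabla_X(JX)=J\nabla_X X$ and the transversal components match automatically (or give an extra identity, e.g. $h(X,JX)=C h(X,X)$), which is not needed for the statement. If instead the intended proof is shorter, it likely just invokes that $D_0$ is parallel as a consequence of $\overline\nabla J=0$ plus non-degeneracy of $D_0$, reducing everything to restricting $\overline\nabla_X(JX)=J\overline\nabla_X X$ to $TM$; I would present the computational version above as the safe path.
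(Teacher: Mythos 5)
Your reduction is sound as far as it goes: expanding $\overline{\nabla}_X(JX)=J\overline{\nabla}_X X$ through the Gauss formula and separating tangential from transversal parts does show that the lemma is equivalent to the two vanishing statements $w\nabla_X X=0$ (no $D'$-component of $\nabla_X X$) and $Bh(X,X)=0$. The difficulty is that these two statements are the entire content of the lemma, and you do not prove either of them. You defer to ``earlier structural lemmas about $D_0$'' that do not exist in this paper: Lemma \ref{2l4} is the only candidate, it carries the extra hypothesis that the screen distribution is totally geodesic (absent here), and even then it only controls the $Rad(TM)$-component of $\nabla_X Y$ (by pairing against $N\in\Gamma(ltr(TM))$), not the $D'$-component. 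Your step (ii) is also suspect as written: since $JD'=L_1\perp L_2$ and $J^2=J+I$, the bundle $L_1\perp L_2$ is precisely the part of $tr(TM)$ on which $J$ has a nontrivial tangential piece, so placing $h(X,X)$ in $L_1, L_2$ would tend to make $Bh(X,X)\neq 0$ rather than $0$; what you would actually need is that $h(X,X)$ avoids $L_1\perp L_2$, and no argument is offered for that.

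The paper takes a shorter, dual route that bypasses both vanishing claims: for arbitrary $Y\in\Gamma(D_0)$ it computes
$g(\nabla_X JX,Y)=g(\overline{\nabla}_X JX,Y)=g(J\overline{\nabla}_X X,Y)=g(\overline{\nabla}_X X,JY)=g(\nabla_X X,JY)=g(J\nabla_X X,Y)$,
using $\overline{\nabla}J=0$, the compatibility (\ref{1e16b}), the invariance $JD_0=D_0$, and the fact that $h$ takes values in $tr(TM)$, which is $g$-orthogonal to $S(TM)\supset D_0$; it then concludes by non-degeneracy of $D_0$. (That final step itself tacitly assumes the difference $\nabla_X JX-J\nabla_X X$ lies in $D_0$ --- which is exactly the information your decomposition shows is missing --- so your computation is a useful diagnostic of what the lemma really requires, but as a proof it is incomplete.) As submitted, the proposal identifies what must be shown but does not show it.
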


\begin{proof} 
Let $X,Y \in \Gamma (D_{0})$, using (\ref{1e5a}), we have 
\begin{eqnarray}
\nonumber {g}(\nabla_{X}JX, Y) & = & {g}(\overline{\nabla}_{X}JX-h(X, JX),Y)\\
\nonumber & = & {g} (\overline{\nabla}_{X}JX, Y) - {g} (h(X, JX), Y)\\
\nonumber & = & {g} (\overline{\nabla}_{X}JX, Y) \\
\nonumber & = & {g} ((\overline{\nabla}_{X}J)X + J(\overline{\nabla}_{X}X), Y).
\end{eqnarray}
Since $\overline{\nabla}_{X}J=0,$ 
\begin{eqnarray}
\nonumber {g}(\nabla_{X}JX, Y) & = & {g}(J(\overline{\nabla}_{X}X), Y) \\
\nonumber & = & {g}(J(\overline{\nabla}_{X}X), Y)\\
\nonumber & = & {g}((\overline{\nabla}_{X}X), JY)\\
\nonumber & = & {g}(\nabla_{X}X, JY) - {g}(h(X,X), JY) \\
\nonumber & = & {g}(\nabla_{X}X, JY)\\
\nonumber {g}(\nabla_{X}JX, Y) & = & {g}(J(\nabla_{X}X), Y), \\
\nonumber {g}(\nabla_{X}JX - J(\nabla_{X}X), Y) & = & 0.
\end{eqnarray}
 Non-degeneracy of $D_{0},$ implies \\
$$ \nabla_{X}JX = J(\nabla_{X}X),$$ 
where $X \in \Gamma (D_{0}).$
\end{proof}

\section{ Geodesic CR-lightlike submanifolds }

\begin{defn}\label{3d1}
A CR-lightlike submanifold of a golden semi-Riemannian manifold is called mixed geodesic CR-lightlike submanifold if its second fundamental form $h$ satisfies 
$$h(X,U)=0, $$ 
where $X \in \Gamma (D)$ and $U \in \Gamma (D').$
\end{defn}

\begin{defn}\label{3d2}
	A CR-lightlike submanifold of a golden semi-Riemannian manifold is called $D-$geodesic CR-lightlike submanifold if its second fundamental form $h$ satisfies 
	$$h(X,Y)=0, $$ 
	where $X, Y \in \Gamma (D).$
\end{defn}

\begin{defn}\label{3d3}
	A CR-lightlike submanifold of a golden semi-Riemannian manifold is called $D'-$geodesic CR-lightlike submanifold if its second fundamental form $h$ satisfies 
	$$h(U,V)=0, $$ 
	where $U, V \in \Gamma (D').$
\end{defn}

\begin{thm}\label{2th1}
Let $\overline{M}$ be a golden semi-Riemannian manifold and $M$ be a CR-lightlike submanifold of $\overline{M}.$ Then, M is totally geodesic if and only if
$$ (L_{\xi}{g})(X,Y) = 0 $$ and
$$ (L_{W}{g})(X,Y) = 0 $$
for any $X, Y \in \Gamma (TM), \; \xi \in \Gamma (Rad(TM)) \; and \; W \in \Gamma (S(TM^{\bot})).$
\end{thm}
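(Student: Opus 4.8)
The plan is to translate the two Lie-derivative conditions into statements about the components $h^{l}$ and $h^{s}$ of the second fundamental form, and then read off the equivalence. Since $\overline{\nabla}$ is the Levi-Civita connection of $g$ (hence metric and torsion-free), for any $V\in\Gamma(T\overline{M})$ and $X,Y\in\Gamma(TM)$ one has the standard identity
$$ (L_{V}g)(X,Y) = g(\overline{\nabla}_{X}V,Y) + g(\overline{\nabla}_{Y}V,X). $$
I would first record this, and then specialise $V$ to a section $\xi\in\Gamma(Rad(TM))$ and to a section $W\in\Gamma(S(TM^{\bot}))$, reducing each right-hand side to a multiple of $g(h^{l}(X,Y),\xi)$, respectively $g(h^{s}(X,Y),W)$.

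For the radical case, note that $\xi\in\Gamma(Rad(TM))\subseteq\Gamma(TM)\cap\Gamma(TM^{\bot})$, so $g(\xi,Z)=0$ for every $Z\in\Gamma(TM)$. Writing $g(\overline{\nabla}_{X}\xi,Y)=X\,g(\xi,Y)-g(\xi,\overline{\nabla}_{X}Y)$ and expanding $\overline{\nabla}_{X}Y$ by the Gauss formula (\ref{1e6}), the term $X\,g(\xi,Y)$ vanishes, $g(\xi,\nabla_{X}Y)=0$ because $\nabla_{X}Y\in\Gamma(TM)$, and $g(\xi,h^{s}(X,Y))=0$ because $Rad(TM)\perp S(TM^{\bot})$; what remains is $g(\overline{\nabla}_{X}\xi,Y)=-g(h^{l}(X,Y),\xi)$. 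Symmetrising in $X,Y$ and using the symmetry of $h^{l}$ yields $(L_{\xi}g)(X,Y)=-2\,g(h^{l}(X,Y),\xi)$. In exactly the same way, $W\in\Gamma(S(TM^{\bot}))$ is orthogonal both to $\Gamma(TM)$ and to $\Gamma(ltr(TM))$, so the same expansion gives $g(\overline{\nabla}_{X}W,Y)=-g(h^{s}(X,Y),W)$ and hence $(L_{W}g)(X,Y)=-2\,g(h^{s}(X,Y),W)$. (One may alternatively use the Weingarten equation (\ref{1e8}) together with $g(A_{W}X,Y)=g(h^{s}(X,Y),W)$, but the route above avoids introducing the shape operator.)

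With these two identities the equivalence is immediate. If $M$ is totally geodesic then $h\equiv0$, so $h^{l}=h^{s}=0$ and both Lie derivatives vanish. Conversely, suppose $(L_{\xi}g)(X,Y)=0$ for all $X,Y\in\Gamma(TM)$ and all $\xi\in\Gamma(Rad(TM))$; then $g(h^{l}(X,Y),\xi)=0$ for every such $\xi$, and since $h^{l}(X,Y)\in\Gamma(ltr(TM))$ and the pairing between $ltr(TM)$ and $Rad(TM)$ is non-degenerate with $g(N_{i},\xi_{j})=\delta_{ij}$ by Theorem \ref{the1} (while $ltr(TM)\perp S(TM)$ and $ltr(TM)\perp S(TM^{\bot})$), we conclude $h^{l}(X,Y)=0$. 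Likewise $(L_{W}g)(X,Y)=0$ for all $W\in\Gamma(S(TM^{\bot}))$ forces $g(h^{s}(X,Y),W)=0$, and non-degeneracy of $S(TM^{\bot})$ gives $h^{s}(X,Y)=0$. Hence $h=h^{l}+h^{s}\equiv0$ and $M$ is totally geodesic.

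I expect the only delicate point to be the bookkeeping of the orthogonality relations among $Rad(TM)$, $ltr(TM)$ and $S(TM^{\bot})$ — in particular the facts that a radical vector is orthogonal to all of $TM$, that $ltr(TM)$ is totally lightlike yet pairs non-degenerately with $Rad(TM)$, and that $S(TM^{\bot})$ is non-degenerate and orthogonal to $ltr(TM)$. These are precisely what allow us both to collapse the Gauss-formula expansion to a single term and to run the two non-degeneracy arguments that conclude $h^{l}=0$ and $h^{s}=0$.
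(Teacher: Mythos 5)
Your proposal is correct and follows essentially the same route as the paper: both reduce the theorem to the identity $2\,{g}(h(X,Y),\xi)=-(L_{\xi}{g})(X,Y)$ and $2\,{g}(h(X,Y),W)=-(L_{W}{g})(X,Y)$, you via the closed-form formula $(L_{V}g)(X,Y)=g(\overline{\nabla}_{X}V,Y)+g(\overline{\nabla}_{Y}V,X)$ and the paper by unwinding the brackets directly. If anything, your write-up is the more complete one, since you make explicit the non-degeneracy of the pairing between $ltr(TM)$ and $Rad(TM)$ and of $S(TM^{\bot})$ needed to conclude $h^{l}=h^{s}=0$ in the converse direction, a step the paper leaves implicit.
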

\proof Since M is totally geodesic, then
$$ h(X,Y) = 0 $$ for any $X, Y \in \Gamma (TM).$ \\
By definition of lightlike submanifolds, $h(X,Y) = 0$ if and only if
$$ {g}(h(X,Y), \xi) = 0 $$ and
$$ {g}(h(X,Y), W) = 0. $$

\begin{eqnarray}
 \nonumber {g}(h(X,Y), \xi) &=& {g} (\overline{\nabla}_{X} Y - \nabla_{X}Y, \xi) \\
   \nonumber  {} &=& {g} (\overline{\nabla}_{X} Y, \xi) - {g}(\nabla_{X}Y, \xi) \\
  \nonumber  {} &=& {g} ({\nabla}_{X} Y, \xi) \\
 \nonumber  {} &=& \overline{\nabla}_{X} {g} ( Y, \xi) - {g}(Y, \overline{\nabla}_{X}\xi) \\
 \nonumber  {} &=&  - {g}(Y, \overline{\nabla}_{X}\xi)\\
  \nonumber  {} &=&  - {g} (Y, [X, \xi] + \overline{\nabla}_{\xi}X \\
 \nonumber  {}&=& - {g} (Y, [X, \xi])  -  g(Y, \overline{\nabla}_{\xi}X) \\
  \nonumber  {} &=& - {g} (Y, [X, \xi])  +  {g}(\overline{\nabla}_{\xi}Y, X) \\
  \nonumber  {} &=& - {g} (Y, [X, \xi])  + {g}([\xi, Y], X) + {g}(\overline{\nabla}_{Y}\xi, X) \\
  \nonumber  {} &=&  - {g} (Y, [X, \xi])  + {g}(X, [\xi, Y]) + {g}(\overline{\nabla}_{Y}\xi, X) \\
  \nonumber  {} &=& - (L_{\xi}{g})(X,Y) + {g}(\overline{\nabla}_{Y}\xi, X) \\
  \nonumber  {} &=& - (L_{\xi}{g})(X,Y) - {g}(\xi,\overline{\nabla}_{Y} X) \\
  \nonumber  {} &=& - (L_{\xi}{g})(X,Y) - {g}(\xi,{\nabla}_{Y} X + h(X,Y)) \\
  \nonumber  {} &=& - (L_{\xi}{g})(X,Y) - {g}(\xi,{\nabla}_{Y} X) - {g}(\xi,h(X,Y))) \\
  \nonumber  {} &=& - (L_{\xi}{g})(X,Y)  - {g}(\xi, h(X,Y))) \\
  \nonumber  2{g}(h(X,Y) &=&  - (L_{\xi}{g})(X,Y).
\end{eqnarray}
Since ${g}(h(X,Y), \xi) = 0,$ we have
\begin{equation}\label{2e1}
\nonumber (L_{\xi}{g})(X,Y) = 0.
\end{equation}
Similarly,
\begin{eqnarray} \
 \nonumber {g}(h(X,Y),W) &=& {g}(\overline{\nabla}_{X}Y - \nabla_{X}Y, W) \\
 \nonumber  {} &=& {g}(\overline{\nabla}_{X}Y, W) \\
 \nonumber  {} &=& - {g}(Y, [X, W] + \overline{\nabla}_{W}X) \\
\nonumber   {} &=& - {g}(Y, [X, W]) - {g}(Y, \overline{\nabla}_{W}X) \\
 \nonumber  {} &=& - {g}(Y, [X, W]) + {g}(\overline{\nabla}_{W}Y, X) \\
 \nonumber  {} &=&  - {g} (Y, [X, W])  + {g}([W, Y], X) + {g}(\overline{\nabla}_{Y}W, X) \\
  \nonumber {} &=&  - {g} (Y, [X, W])  + {g}(X, [W, Y]) + {g}(\overline{\nabla}_{Y}W, X) \\
 \nonumber  {} &=& - (L_{W}{g})(X,Y) + {g}(\overline{\nabla}_{Y}W, X) \\
  \nonumber {} &=& - (L_{W}{g})(X,Y) - {g}(W,\overline{\nabla}_{Y} X) \\
  \nonumber {} &=& - (L_{W}{g})(X,Y) - {g}(W,{\nabla}_{Y} X + h(X,Y)) \\
 \nonumber  {} &=& - (L_{W}{g})(X,Y) - {g}(W,{\nabla}_{Y} X) - {g}(W,h(X,Y))) \\
 \nonumber  {} &=& - (L_{W}{g})(X,Y)  - {g}(W, h(X,Y)) \\
 \nonumber  2{g}(h(X,Y),W) &=& - (L_{W}{g})(X,Y).
\end{eqnarray}
Since ${g}(h(X,Y),W) = 0,$ we get

\begin{equation}\label{2e2}
\nonumber (L_{W}{g})(X,Y) = 0
\end{equation}
for any $W \in \Gamma (S(TM^{\bot})).$

\begin{lem}\label{1l1}
Let $M$ be a CR-lightlike submanifold of a golden semi-Riemannian manifold $\overline{M}.$ Then
$$ {g}(h(X,Y), W) = {g}(A_{W}X,Y) $$
for any $X \in \Gamma (D), \; Y \in \Gamma (D') \; and \; W \in \Gamma (S(TM^{\bot})).$
\end{lem}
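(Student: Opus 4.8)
The plan is to obtain the identity purely from the fact that $\overline{\nabla}$ is a metric connection, applied to the pair $(Y,W)$. Since $Y \in \Gamma(TM)$ while $W \in \Gamma(S(TM^{\bot}))$, and $S(TM^{\bot})$ is orthogonal to $TM$, the function ${g}(Y,W)$ vanishes identically along $M$. Differentiating in the direction $X$ gives $0 = X\,{g}(Y,W) = {g}(\overline{\nabla}_{X}Y, W) + {g}(Y, \overline{\nabla}_{X}W)$, so everything reduces to evaluating these two terms by means of the Gauss formula (\ref{1e6}) and the Weingarten-type formula (\ref{1e8}) and discarding the pieces that are orthogonal to $W$, respectively to $Y$.

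For the first term, substitute (\ref{1e6}): $\overline{\nabla}_{X}Y = \nabla_{X}Y + h^{l}(X,Y) + h^{s}(X,Y)$. Here $\nabla_{X}Y \in \Gamma(TM)$ is orthogonal to $W$, and $h^{l}(X,Y) \in \Gamma(ltr(TM))$ is orthogonal to $W \in \Gamma(S(TM^{\bot}))$ because $tr(TM) = ltr(TM) \bot S(TM^{\bot})$ is an orthogonal sum. Hence ${g}(\overline{\nabla}_{X}Y, W) = {g}(h^{s}(X,Y), W)$, and since $h = h^{l} + h^{s}$ with $h^{l}(X,Y)$ again orthogonal to $W$, this equals ${g}(h(X,Y), W)$.

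For the second term, substitute (\ref{1e8}): $\overline{\nabla}_{X}W = -A_{W}X + \nabla_{X}^{s}W + D^{l}(X,W)$. The summand $\nabla_{X}^{s}W \in \Gamma(S(TM^{\bot}))$ is orthogonal to $Y \in \Gamma(TM)$. The subtle point — and the only place where one must be careful, since $ltr(TM)$ is not orthogonal to $TM$ in general — is the term $D^{l}(X,W) \in \Gamma(ltr(TM))$: it still pairs to zero with $Y$ because $Y \in \Gamma(D') \subset \Gamma(S(TM))$ and $S(TM)$ is orthogonal to $ltr(TM)$ (indeed $ltr(TM) \subset S(TM^{\bot})^{\bot} \subset S(TM)^{\bot}$). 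Therefore ${g}(Y, \overline{\nabla}_{X}W) = -{g}(Y, A_{W}X) = -{g}(A_{W}X, Y)$.

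Putting the two evaluations together yields $0 = {g}(h(X,Y), W) - {g}(A_{W}X, Y)$, which is the asserted equality. I expect no genuine obstacle here; the computation is routine Gauss--Weingarten bookkeeping, and the one thing worth flagging in the write-up is exactly why $Y$ annihilates the $ltr(TM)$-component, i.e.\ the use of $Y \in \Gamma(D') \subset \Gamma(S(TM))$. Note in particular that the hypothesis $X \in \Gamma(D)$ is never actually invoked; only the location of $Y$ in the screen distribution matters.
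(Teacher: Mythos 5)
Your proof is correct and takes essentially the same route as the paper's: metric compatibility of $\overline{\nabla}$ applied to ${g}(Y,W)=0$, followed by the Gauss formula (\ref{1e6}) and the Weingarten formula (\ref{1e8}), discarding the terms orthogonal to $W$ and to $Y$. You are in fact slightly more careful than the paper at the one delicate point --- justifying ${g}(Y,D^{l}(X,W))=0$ via $Y\in\Gamma(D')\subset\Gamma(S(TM))$ and $ltr(TM)\subset S(TM)^{\bot}$, a term the paper drops without comment --- and your observation that the hypothesis $X\in\Gamma(D)$ is never used is accurate.
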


\proof 
Using (\ref{1e5a}), we have
\begin{eqnarray}
  \nonumber {g} (h(X,Y), W) &=& {g}(\overline{\nabla}_{X}Y - \nabla_{X}Y, W) \\
  \nonumber {} &=& {g}(\overline{\nabla}_{X}Y, W) - {g}(\nabla_{X}Y, W)\\
  \nonumber {} &=& {g}(\overline{\nabla}_{X}Y, W) \\
 \nonumber {} &=& {g}(Y,\overline{\nabla}_{X}W).
\end{eqnarray}
From (\ref{1e8}) it follows that
\begin{eqnarray}
  \nonumber {g}(h(X,Y), W) &=& - {g}(Y,- A_{W}X + \nabla_{X}^{s}W + D^{s}(X, W)) \\
  \nonumber {} &=& {g}(Y, A_{W}X) - {g} (Y, \nabla_{X}^{s}W)  - {g}(Y, D^{s}(X, W)) \\
  \nonumber {g}(h(X,Y), W) &=& {g}(Y, A_{W}X),
\end{eqnarray}
where $X \in \Gamma (D), \; Y \in \Gamma (D') \; and \; W \in \Gamma (S(TM^{\bot})).$
\begin{thm}\label{2th2}
Let $M$ be a CR-lightlike submanifold of a golden semi-Riemannian manifold $\overline{M}.$ Then $M$ is mixed geodesic if and only if
$$ A^{*}_{\xi}X \in \Gamma (D_{0} \bot {J}L_{1}) $$ and
$$ A_{W}X \in \Gamma (D_{0} \bot Rad(TM) \bot {J}L_{1}) $$
for any $X \in \Gamma (D), \; \xi \in \Gamma (Rad(TM)) \; and \; W \in \Gamma (S(TM^{\bot})).$
\end{thm}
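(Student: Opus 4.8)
The plan is to follow the template of the proof of Theorem~\ref{2th1}: convert the condition ``$h$ vanishes on mixed pairs'' into two scalar equations, one obtained by pairing with $Rad(TM)$ and one by pairing with $S(TM^{\bot})$, and then read off from each equation where the relevant shape operator must carry $X$ inside the bundle decomposition of Definition~\ref{de2}. By Definition~\ref{3d1}, $M$ is mixed geodesic exactly when $h(X,U)=0$ for all $X\in\Gamma(D)$, $U\in\Gamma(D')$. Splitting $h=h^{l}+h^{s}$ as in (\ref{1e6}) and using that $ltr(TM)$ pairs non-degenerately with $Rad(TM)$, while $S(TM^{\bot})$ is non-degenerate and orthogonal to both $Rad(TM)$ and $ltr(TM)$, the vanishing $h(X,U)=0$ is equivalent to the two conditions $g(h(X,U),\xi)=0$ and $g(h(X,U),W)=0$ holding for every $\xi\in\Gamma(Rad(TM))$ and $W\in\Gamma(S(TM^{\bot}))$.

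For the first condition: since $h^{s}(X,U)\in\Gamma(S(TM^{\bot}))$ is orthogonal to $Rad(TM)$ we have $g(h(X,U),\xi)=g(h^{l}(X,U),\xi)$, and as $U\in\Gamma(D')\subset\Gamma(S(TM))$ we may use $PU=U$ together with the preliminary identity $g(h^{l}(X,PU),\xi)=g(A^{*}_{\xi}X,PU)$ to get $g(h(X,U),\xi)=g(A^{*}_{\xi}X,U)$. Hence $g(h(X,U),\xi)=0$ for all $\xi,U$ if and only if $A^{*}_{\xi}X$ is $g$-orthogonal to $D'$. Since $A^{*}_{\xi}X\in\Gamma(S(TM))$ and $S(TM)=\{J(Rad(TM))\oplus D'\}\bot D_{0}$ with $D'$ non-degenerate and $D_{0}\bot D'$, this orthogonality is equivalent to $A^{*}_{\xi}X\in\Gamma(D_{0}\bot JL_{1})$, where $JL_{1}=M_{1}$ is the part of $S(TM)$ complementary to $D'$ (other than $D_{0}$). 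For the second condition: $h^{l}(X,U)\in\Gamma(ltr(TM))$ is orthogonal to $S(TM^{\bot})$, so $g(h(X,U),W)=g(h^{s}(X,U),W)$, and Lemma~\ref{1l1} applied with $Y=U\in\Gamma(D')$ gives $g(h(X,U),W)=g(A_{W}X,U)$; thus $g(h(X,U),W)=0$ for all $W,U$ if and only if $A_{W}X$ is orthogonal to $D'$. Since $A_{W}X\in\Gamma(TM)$ and $TM=D\oplus D'=\bigl(Rad(TM)\bot J\,Rad(TM)\bot D_{0}\bigr)\oplus D'$ with $D'$ non-degenerate, this is equivalent to $A_{W}X\in\Gamma(D_{0}\bot Rad(TM)\bot JL_{1})$. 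Because all the intermediate steps are equivalences, the conjunction of the two displayed conditions is equivalent to $M$ being mixed geodesic, which settles both implications simultaneously.

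The step I expect to be the main obstacle is the final one in each half: passing from ``$A^{*}_{\xi}X$ (resp. $A_{W}X$) is orthogonal to $D'$'' to the asserted membership. Here one must use the fine structure of Definition~\ref{de2} with care — the non-degeneracy of $D'$ (and $D_{0}$), so that orthogonality to $D'$ actually forces the $D'$-component to vanish, and a correct description of the orthogonal complement of $D'$ inside $S(TM)$ and inside $TM$, remembering that $Rad(TM)$ is lightlike so a $Rad(TM)$-component of $A_{W}X$ cannot be detected by pairing with $D'$ and must be retained in the second statement. A subsidiary, purely bookkeeping point is to confirm that the orthogonality relations used ($h^{s}\bot Rad(TM)$, $h^{l}\bot S(TM^{\bot})$, $D_{0}\bot D'$, $Rad(TM)\bot S(TM)$) are precisely those guaranteed by the direct-sum decompositions of Section~2.
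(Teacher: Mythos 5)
Your proposal is correct and follows essentially the same route as the paper: it reduces mixed geodesy to the two scalar conditions $g(h(X,U),\xi)=0$ and $g(h(X,U),W)=0$, identifies these with $g(A^{*}_{\xi}X,U)$ and $g(A_{W}X,U)$, and concludes by non-degeneracy of $D'$. The only difference is cosmetic — you obtain the two key identities by citing the preliminary screen-operator formula and Lemma~\ref{1l1}, whereas the paper re-derives both from the Gauss--Weingarten equations; your explicit remark that every step is an equivalence (so both implications follow at once) is a small improvement in care over the paper's one-directional write-up.
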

\proof 
For any $X \in \Gamma (D), \; Y \in \Gamma (D') \; and \; W \in \Gamma (S(TM^{\bot})),$ we have \\
From (\ref{1e5a}),
\begin{eqnarray}
 \nonumber {g}(h(X,Y), \xi) &=& {g}(\overline{\nabla}_{X}Y - \nabla_{X}Y, \xi) \\
  \nonumber {} &=& {g}(\overline{\nabla}_{X}Y, \xi) - {g}(\nabla_{X}Y, \xi) \\
  \nonumber {} &=& {g}(\overline{\nabla}_{X}Y, \xi ) \\
  \nonumber {} &=& - {g}(Y, \overline{\nabla}_{X}\xi).
\end{eqnarray}
Again using (\ref{1e5a}), we get
\begin{eqnarray}
  \nonumber {} {g}(h(X,Y), \xi) &=& - {g}(Y, \nabla_{X}\xi + h(X, \xi)) \\
  \nonumber {} &=& - {g}(Y, \nabla_{X}\xi) - {g}(Y, h(X, \xi)) \\
  \nonumber {} &=& - {g}(Y, \nabla_{X}\xi).
\end{eqnarray}
Using (\ref{1e10}), we have
\begin{eqnarray}
  \nonumber {} {g}(h(X,Y), \xi) &=& - {g}( Y, - A_{\xi}^{*}X + \nabla^{*t}_{X}\xi) \\
  \nonumber {} &=& {g}( Y,  A_{\xi}^{*}X) + {g}(Y, \nabla^{*t}_{X}\xi) \\
  \nonumber {g}(h(X,Y), \xi) &=& {g}( Y,  A_{\xi}^{*}X)  \\
 \nonumber {g}( Y,  A_{\xi}^{*}X)&=& 0.
\end{eqnarray}
Since CR-lightlike submanifold $M$ is mixed geodesic. Then 
$$ {g}(h(X,Y), \xi) = 0 $$
$$ \Rightarrow {g}( Y,  A_{\xi}^{*}X) = 0 $$ 
$$\Rightarrow A_{\xi}^{*}X \in \Gamma (D_{0} \bot {J}L_{1}), $$
where $X \in \Gamma (D),  Y \in \Gamma (D').$\\
From (\ref{1e5a}), we get 

\begin{eqnarray}
  \nonumber {g}(h(X,Y), W) &=& {g} (\overline{\nabla}_{X}Y - \nabla_{X}Y, W) \\
  \nonumber {} &=& {g} (\overline{\nabla}_{X}Y, W) - {g}(\nabla_{X}Y, W) \\
  \nonumber {} &=& {g} (\overline{\nabla}_{X}Y, W) \\
  \nonumber {} &=& - {g} (Y, \overline{\nabla}_{X}W).
\end{eqnarray}
From (\ref{1e8}), we get
\begin{eqnarray}
  \nonumber {} {g}(h(X,Y), W) &=& - {g} (Y, - A_{W}X + \nabla_{X}^{s}W + D^{l}(X,W))\\
  \nonumber {} &=& {g} (Y,  A_{W}X) - {g}(Y, \nabla_{X}^{s}W) - {g}(Y, D^{l}(X,W)) \\
  \nonumber {g}(h(X,Y), W) &=& {g} (Y,  A_{W}X).
\end{eqnarray}
Since, $M$ is mixed geodesic, then ${g}(h(X,Y), W) = 0$\\
$$\Rightarrow {g} (Y,  A_{W}X) = 0. $$
$$ A_{W}X \in \Gamma (D_{0} \bot Rad(TM) \bot {J}L_{1}). $$

\begin{thm}\label{2th3}
Let $M$ be a CR-lightlike submanifold of a semi-Riemannian manifold $\overline{M}.$ Then $M$ is $D'-$geodesic if and only if $A_{W}Z$ and $A_{\xi}^{*}Z$ have no component in $M_{2}\bot {J}Rad(TM)$ for any $Z \in \Gamma (D'), \; \xi \in \Gamma (Rad(TM)) \; and \; W \in \Gamma (S(TM^{\bot})).$
\end{thm}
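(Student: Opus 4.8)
The plan is to mimic the proofs of Lemma \ref{1l1} and Theorem \ref{2th2}. By Definition \ref{3d3}, $M$ is $D'$-geodesic if and only if $h(U,Z)=0$ for every $U,Z\in\Gamma(D')$; and since $tr(TM)=ltr(TM)\bot S(TM^{\bot})$ with $ltr(TM)$ paired non-degenerately with $Rad(TM)$ (Theorem \ref{the1}) and $S(TM^{\bot})$ non-degenerate, this is equivalent to the two scalar conditions ${g}(h(U,Z),\xi)=0$ and ${g}(h(U,Z),W)=0$ for all $\xi\in\Gamma(Rad(TM))$ and $W\in\Gamma(S(TM^{\bot}))$. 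So it suffices to rewrite these two inner products in terms of $A^{*}_{\xi}$ and $A_{W}$.

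For the first, using (\ref{1e5a}) I would write ${g}(h(U,Z),\xi)={g}(\overline{\nabla}_{U}Z,\xi)=-{g}(Z,\overline{\nabla}_{U}\xi)$; applying (\ref{1e5a}) once more kills the term ${g}(Z,h(U,\xi))$ because $Z\in\Gamma(TM)$, and then (\ref{1e10}) replaces $\nabla_{U}\xi$ by $-A^{*}_{\xi}U+\nabla^{*t}_{U}\xi$, where $\nabla^{*t}_{U}\xi\in\Gamma(Rad(TM))$ is orthogonal to $Z\in\Gamma(D')\subset\Gamma(S(TM))$; hence ${g}(h(U,Z),\xi)={g}(A^{*}_{\xi}U,Z)={g}(A^{*}_{\xi}Z,U)$, the last equality by symmetry of $h$ (or by (\ref{1e10c})). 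For the second, the computation of Lemma \ref{1l1} applies essentially verbatim with $U,Z$ in the roles of $X,Y$: (\ref{1e5a}) and (\ref{1e8}) give ${g}(h(U,Z),W)={g}(A_{W}Z,U)$ once the $\nabla^{s}$ and $D^{l}$ terms are discarded (both orthogonal to $Z\in\Gamma(S(TM))$). Therefore $M$ is $D'$-geodesic if and only if ${g}(A^{*}_{\xi}Z,U)={g}(A_{W}Z,U)=0$ for all $U\in\Gamma(D')$, $\xi\in\Gamma(Rad(TM))$, $W\in\Gamma(S(TM^{\bot}))$, both $A_{W}Z$ and $A^{*}_{\xi}Z$ lying in $\Gamma(TM)$ (the latter in fact in $\Gamma(S(TM))$).

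It remains to translate ``orthogonal to every section of $D'$'' into ``has no component in $M_{2}\bot{J}Rad(TM)$'', and this is the step I expect to be the main obstacle, since it is where the fine structure of a CR-lightlike submanifold enters. Using Definition \ref{de2}, (\ref{1e16a}), (\ref{1e16b}) and Theorem \ref{the1}, I would analyse the induced metric on $TM=Rad(TM)\bot{J}Rad(TM)\bot D_{0}\bot D'$: one checks that $D_{0}$ and the $M_{2}$-part of $D'$ are non-degenerate, while ${J}Rad(TM)$ and the $M_{1}$-part of $D'$ are totally lightlike and pair hyperbolically with each other (this rests on ${g}({J}\xi,{J}N)={g}(\xi,N)$, coming from (\ref{1e16b}) and ${J}^{2}={J}+I$, together with ${g}(N,\xi)=\delta$ from Theorem \ref{the1}), and that $Rad(TM)$ lies in the radical. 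Consequently, for $X\in\Gamma(TM)$ the functional ${g}(X,\cdot)$ vanishes on $\Gamma(D')$ precisely when the ${J}Rad(TM)$-component and the $M_{2}$-component of $X$ are zero, i.e. precisely when $X$ has no component in $M_{2}\bot{J}Rad(TM)$. Feeding $X=A^{*}_{\xi}Z$ and $X=A_{W}Z$ into this criterion yields both implications and completes the proof; everything outside this orthogonality bookkeeping is the routine Gauss--Weingarten manipulation already used above.
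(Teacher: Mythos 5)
Your proposal is correct and follows essentially the same route as the paper: reduce $D'$-geodesy to the vanishing of ${g}(h(U,Z),\xi)$ and ${g}(h(U,Z),W)$, convert these via the Gauss--Weingarten formulae (\ref{1e5a}), (\ref{1e8}), (\ref{1e10}) into ${g}(A^{*}_{\xi}Z,U)$ and ${g}(A_{W}Z,U)$, and then invoke the structure of $D'$. Your final paragraph in fact supplies the justification the paper omits, namely that the hyperbolic pairing of $M_{1}$ with ${J}Rad(TM)$ and the non-degeneracy of $M_{2}$ are what turn ``orthogonal to all of $D'$'' into ``no component in $M_{2}\bot{J}Rad(TM)$.''
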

\proof From (\ref{1e5a}) we have
\begin{eqnarray}
  \nonumber {g}(h(Z,V), W) &=& {g}(\overline{\nabla}_{Z}V - \nabla_{X}Y, W) \\
  \nonumber {}  &=& {g}(\overline{\nabla}_{Z}V, W) - {g}(\nabla_{X}Y, W) \\
  \nonumber {}  &=& - \overline{g}(\nabla_{X}Y, W),
\end{eqnarray}
where $Z, V \in \Gamma (D').$ \\
Using (\ref{1e8}), we have
\begin{eqnarray}
  \nonumber {g}(h(Z,V), W) &=& - {g}(V, - A_{W}Z + \nabla_{Z}^{s}+ D^{l}(Z,W)) \\
  \nonumber {} &=&  {g}(V,  A_{W}Z) - {g}(V, \nabla_{Z}^{s})- {g}(V, D^{l}(Z,W))\\
  {g}(h(Z,V), W) &=&  {g}(V,  A_{W}Z) \label{2e3}.
\end{eqnarray}
Since $M$ is $D'-$geodesic, then ${g}(h(Z,V),W)=0.$ \\
From (\ref{2e3}), we get\\ 
$$ {g}(V,  A_{W}Z) = 0. $$
Now,
\begin{eqnarray}
  \nonumber {g}(h(Z,V), \xi) &=& {g}(\overline{\nabla}_{Z}V - \nabla_{Z}V, \xi) \\
 \nonumber {} &=& {g}(\overline{\nabla}_{Z}V, \xi) - {g}(\nabla_{Z}V, \xi) \\
   \nonumber {} &=& {g}(\overline{\nabla}_{Z}V, \xi) =  - {g}(V, \overline{\nabla}_{Z} \xi).
\end{eqnarray}
From (\ref{1e10}), we have
\begin{eqnarray}
  \nonumber {} {g}(h(Z,V), \xi) &=& - {g}(V, - A_{\xi}^{*}Z + \nabla_{Z}^{*t}\xi) \\
 \nonumber {} &=& {g}(V,  A_{\xi}^{*}Z) - {g}(V, \nabla_{Z}^{*t}\xi) \\
  \nonumber {g}(h(Z,V), \xi) &=& {g}(V,  A_{\xi}^{*}Z) \\
\nonumber  {g}(h(Z,V), \xi) &=&  {g}( A_{\xi}^{*}V,  Z). \label{2e4}
\end{eqnarray}
Since $M$ is $D'-$ geodesic, then
$$ {g}(h(Z,V), \xi) = 0 $$ 
$$ \Rightarrow {g}( A_{\xi}^{*}V,  Z) = 0. $$
Thus, $A_{W}Z$ and $A^{*}_{\xi}Z$ have no component in $M_{2}\bot JRad(TM).$

\begin{lem}\label{2l2}
Let $M$ be a CR-lightlike submanifold of a golden semi-Riemannian manifold $\overline{M}.$ If the distribution $D$ is integrable, then the following conditions hold:\\
(i) $ J{g}(D^{l}({J}X,W),Y) - {g}(D^{l}(X,W), {J}Y)  =  {g}(A_{W}{J}X, Y) - {g}( A_{W}X,{J}Y), $ \\
(ii) $  {g}(D^{l}({J}X), \xi)  = {g}( A_{W}X, {J} \xi), $\\
(iii) ${g}(D^{l}(X,W),\xi) =  {g}(A_{W}{J}X, J\xi) - {g}( A_{W}X, J\xi),$\\
where $X, Y \in \Gamma (TM), \; \xi \in \Gamma (Rad(TM)) $ and $ W \in \Gamma (S(TM^{\bot})).$
\end{lem}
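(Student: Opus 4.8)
The plan is to reduce all three identities to two ingredients: a master relation obtained from the Weingarten equation $(\ref{1e8})$, and a $J$-twisted symmetry of $h^{s}$ on the distribution $D$, the latter being the only place where integrability of $D$ enters. Throughout I take the test fields $X,Y$ to be sections of $D$ (so that $JX,JY\in\Gamma(D)$ as well), consistent with the integrability hypothesis being placed on $D$. First I would establish, for $X,Y\in\Gamma(TM)$ and $W\in\Gamma(S(TM^{\bot}))$, the master relation
\begin{equation}\label{masterrel}
{g}(h^{s}(X,Y),W)={g}(A_{W}X,Y)-{g}(D^{l}(X,W),Y).
\end{equation}
This follows by equating the two expansions of ${g}(\overline{\nabla}_{X}W,Y)$ --- substitute $(\ref{1e8})$ on one side, and on the other use $\overline{\nabla}$-metricity to get $-{g}(W,\overline{\nabla}_{X}Y)$ and then $(\ref{1e6})$ --- noting that the terms ${g}(\nabla^{s}_{X}W,Y)$, ${g}(W,\nabla_{X}Y)$ and ${g}(W,h^{l}(X,Y))$ all vanish because $S(TM^{\bot})$ is orthogonal to $TM$ and to $ltr(TM)$. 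Taking $Y=\xi\in\Gamma(Rad(TM))$ and using ${g}(A_{W}X,\xi)=0$ (the vector $A_{W}X$ is tangent to $M$, while $\xi\in TM^{\bot}$) gives the special case ${g}(D^{l}(X,W),\xi)=-{g}(h^{s}(X,\xi),W)$.

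The substantial step is the $J$-twisted symmetry $h^{s}(JX,Y)=h^{s}(X,JY)$ for $X,Y\in\Gamma(D)$. Since $\overline{\nabla}J=0$, one has $\overline{\nabla}_{X}(JY)-\overline{\nabla}_{Y}(JX)=J(\overline{\nabla}_{X}Y-\overline{\nabla}_{Y}X)=J[X,Y]$. Because $D$ is integrable, $[X,Y]\in\Gamma(D)=\Gamma(Rad(TM)\bot J\,Rad(TM)\bot D_{0})$, and since $J(Rad(TM))=J\,Rad(TM)$, $J(J\,Rad(TM))=(J+I)Rad(TM)$ and $J(D_{0})=D_{0}$ are all contained in $TM$, the field $J[X,Y]$ is tangent to $M$ and hence has zero $S(TM^{\bot})$-component. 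Expanding the left-hand side by the Gauss formula $(\ref{1e6})$ --- legitimate since $JX,JY\in\Gamma(D)\subset\Gamma(TM)$ --- and comparing $S(TM^{\bot})$-components gives $h^{s}(X,JY)=h^{s}(Y,JX)$; since $h^{s}$ is symmetric, this is the claim.

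To finish: for (i), write $(\ref{masterrel})$ with $(X,Y)$ replaced by $(JX,Y)$ and by $(X,JY)$ and subtract the two --- by the $J$-twisted symmetry the $h^{s}$-terms cancel, leaving (i). For (ii) and (iii), start from ${g}(D^{l}(X,W),\xi)=-{g}(h^{s}(X,\xi),W)$ (replacing $X$ by $JX$ for (ii)), move $J$ across $h^{s}$ using the $J$-twisted symmetry and, for (iii), also the golden identity $h^{s}(X,J^{2}\xi)=h^{s}(X,J\xi)+h^{s}(X,\xi)$; combined with ${g}(A_{W}X,J\xi)={g}(h^{s}(X,J\xi),W)$ --- which is $(\ref{masterrel})$ with $Y=J\xi$, since ${g}(D^{l}(X,W),J\xi)=0$ ($J\xi\in S(TM)$ and $D^{l}$ is $ltr(TM)$-valued) --- this recovers the stated formulas, up to the bookkeeping of signs. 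The one real obstacle is the $J$-twisted symmetry: it rests on $J$ carrying each summand of $D$ back into $TM$ --- precisely why integrability of $D$, rather than of $D'$ or $TM$, is the correct hypothesis --- and it requires care in tracking which Gauss-Weingarten terms survive each pairing against $\xi$, $W$, or a tangent field; the rest is routine algebra inside the orthogonal decomposition of $T\overline{M}|_{M}$.
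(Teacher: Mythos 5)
Your proposal follows essentially the same computation as the paper's for part (i): expand $g(D^{l}(JX,W),Y)$ through the Weingarten formula (\ref{1e8}), use metric compatibility of $\overline{\nabla}$ to move the derivative across the pairing, invoke $g(W,h(JX,Y))=g(W,h(X,JY))$, and expand back. Your ``master relation'' $g(h^{s}(X,Y),W)=g(A_{W}X,Y)-g(D^{l}(X,W),Y)$ simply packages those two expansions once instead of carrying them out twice. Where you genuinely add value is in justifying the $J$-twisted symmetry $h^{s}(JX,Y)=h^{s}(X,JY)$: the paper uses this step silently, whereas you derive it correctly from $\overline{\nabla}J=0$, involutivity of $D$, and the fact that $J$ carries each summand of $D=Rad(TM)\perp J\,Rad(TM)\perp D_{0}$ back into $TM$. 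This also makes clear why the hypothesis must be integrability of $D$ specifically, and why $X,Y$ must be restricted to $\Gamma(D)$ rather than ranging over all of $\Gamma(TM)$ as the statement loosely permits.

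What you should not leave as ``bookkeeping of signs'' is parts (ii) and (iii). Carried out to the end, your own route gives $g(D^{l}(JX,W),\xi)=-g(h^{s}(JX,\xi),W)=-g(h^{s}(X,J\xi),W)=-g(A_{W}X,J\xi)$, and, substituting $Y=J\xi$ into (i) and using $J^{2}=J+I$ together with $g(D^{l}(\cdot\,,W),J\xi)=0$ and $g(A_{W}X,\xi)=0$, it gives $g(D^{l}(X,W),\xi)=g(A_{W}X,J\xi)-g(A_{W}JX,J\xi)$ --- in both cases the negative of the printed formula. These signs are not an artifact of your method: the paper's own proof of (ii) expands $\overline{\nabla}_{JX}\xi$ using only the screen equation (\ref{1e10}) and thereby drops the term $g(W,h^{s}(JX,\xi))$, which is the entire content of the identity, and its proof of (iii) loses a sign in the last line. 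So your argument is sound and in fact more careful than the paper's, but a complete write-up must either correct the signs in the statement or exhibit the cancellation you are implicitly hoping for; as printed, (ii) and (iii) do not follow from your (correct) computation, and declaring the discrepancy to be routine bookkeeping leaves the proof of two of the three claims unfinished.
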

\proof From equation (\ref{1e8}), we have
\begin{eqnarray}
  \nonumber {g}(D^{l}({J}X,W),Y) &=& {g}(\overline{\nabla}_{JX}W + A_{W}{J}X - \nabla_{JX}^{s}W, Y) \\
  \nonumber {} &=& {g}(\overline{\nabla}_{JX}W,Y) + {g}(A_{W}{J}X, Y) - {g}(\nabla_{JX}^{s}W, Y) \\
  \nonumber {} &=& {g}(\overline{\nabla}_{JX}W,Y) + {g}(A_{W}{J}X, Y) \\
  \nonumber {} &=& - {g}(W,\overline{\nabla}_{JX}Y) + {g}(A_{W}{J}X, Y). 
\end{eqnarray}
 Using (\ref{1e5a}), we have
\begin{eqnarray}
  \nonumber {} {g}(D^{l}({J}X,W),Y) &=& - {g}(W,{\nabla}_{JX}Y + h({J}X,Y)) + {g}(A_{W}{J}X, Y)\\
  \nonumber {} &=& - {g}(W,{\nabla}_{JX}Y) - {g}(W, h({J}X,Y)) + {g}(A_{W}{J}X, Y) \\
  \nonumber {} &=& - {g}(W, h({J}X,Y)) + {g}(A_{W}{J}X, Y)\\
  \nonumber {} &=& - {g}(W, h(X,{J}Y)) + {g}(A_{W}{J}X, Y).
\end{eqnarray}
 Again using (\ref{1e5a}), we have
\begin{eqnarray} 
  \nonumber {} {g}(D^{l}({J}X,W),Y) &=& - {g}(W, \overline{\nabla}_{X}{J}Y - \nabla_{X}{J}Y) + {g}(A_{W}{J}X, Y) \\
  \nonumber {} &=& - {g}(W, \overline{\nabla}_{X}{J}Y) + {g}(W, \nabla_{X}{J}Y) + {g}(A_{W}{J}X, Y) \\
  \nonumber {} &=& - {g}(W, \overline{\nabla}_{X}{J}Y) + {g}(A_{W}{J}X, Y)  \\
  \nonumber {} &=& {g}(\overline{\nabla}_{X}W, {J}Y) + {g}(A_{W}{J}X, Y).
\end{eqnarray}
Using (\ref{1e8}), we get
\begin{eqnarray}
  \nonumber {} {g}(D^{l}({J}X,W),Y) &=& {g}(- A_{W}X + \nabla_{X}^{s}W + D^{l}(X,W), {J}Y) +  \\
  \nonumber {} &=& {g}(A_{W}{J}X, Y) - {g}( A_{W}X,{J}Y)  + {g}(\nabla_{X}^{s}W, {J}Y)  + \\ &&
   \nonumber {g}(D^{l}(X,W), {J}Y) + {g}(A_{W}{J}X, Y) \\
  \nonumber {} &=& - {g}( A_{W}X,{J}Y) +  {g}(A_{W}{J}X, Y)  \\
  \nonumber {} {g}(D^{l}({J}X,W),Y) - {g}(D^{l}(X,W), {J}Y)  &=& {g}(A_{W}{J}X, Y) - {g}( A_{W}X,{J}Y).
\end{eqnarray}

(ii) Using (\ref{1e8}), we have
\begin{eqnarray}
  \nonumber {g}(D^{l}({J}X, W), \xi) &=& {g}( A_{W}{J}X - \nabla_{{J}X}^{s}W + {\nabla}_{{J}X}W, \xi ) \\
  \nonumber {} &=& {g}( A_{W}{J}X, \xi) - {g}(\nabla_{{J}X}^{s}W, \xi) + \overline{g}({\nabla}_{{J}X}W, \xi ) \\
  \nonumber {} &=& {g}( A_{W}{J}X, \xi) + {g}(\overline{\nabla}_{{J}X}W, \xi ) \\
  \nonumber {} &=& {g}( A_{W}{J}X, \xi) - {g}(W, \overline{\nabla}_{{J}X}\xi ).
\end{eqnarray}
 Using (\ref{1e10}), we get
\begin{eqnarray}
 \nonumber {} {g}(D^{l}({J}X, W), \xi) &=& {g}( A_{W}{J}X, \xi) + {g}(W,  A^{*}_{\xi}{J}X ) - {g}(W, \nabla^{*t}_{{J}X}, \xi) \\
  \nonumber {g}(D^{l}({J}X), \xi) &=& {g}( A_{W}{J}X, \xi) \\
  \nonumber {g}(D^{l}({J}X), \xi)  &=& {g}( A_{W}X, {J} \xi).
\end{eqnarray}
(iii) Replacing $Y$ by $J\xi$ in (i), we have
\begin{eqnarray}
\nonumber J{g}(D^{l}({J}X,W),J\xi) - {g}(D^{l}(X,W), {J^{2}}\xi)  =  {g}(A_{W}{J}X, J\xi) - {g}( A_{W}X,{J^{2}}\xi). 
\end{eqnarray}
Using (\ref{1e16a}), we get
\begin{eqnarray}
	\nonumber J{g}(D^{l}({J}X,W),J\xi) - {g}(D^{l}(X,W), (J+I)\xi) & = & {g}(A_{W}{J}X, J\xi) - k,{g}( A_{W}X,(J+I)\xi)\\
\nonumber J{g}(D^{l}({J}X,W),J\xi) - {g}(D^{l}(X,W), J\xi) - {g}(D^{l}(X,W),\xi) & = & {g}(A_{W}{J}X, J\xi) - {g}( A_{W}X, J\xi) - \\ && \nonumber  {g}( A_{W}X, \xi).\\
\nonumber {g}(D^{l}(X,W),\xi) & = & {g}(A_{W}{J}X, J\xi) - {g}( A_{W}X, J\xi).
\end{eqnarray}

\section{Totally Umbilical CR-lightlike submanifolds.}
\begin{defn}
A lightlike submanifold $(M, g) $ of a semi-Riemannian manifold $(\overline{M}, {g})$ is said to be totally umbilical in $\overline{M}$ if there is a smooth transversal vector field $H \in \Gamma (tr(TM))$ on $M,$ called the transversal curvature vector field of $M,$ such that 
$$ h(X,Y) = H{g}(X,Y), $$ 
where $X, Y \in \Gamma (TM).$\\
Using (\ref{1e6}), $M$ is a totally umbilical if and only if there exists smooth vector fields $H^{l} \in \Gamma (ltr(TM))$ and $H^{s}(X, Y) \in \Gamma (S(TM^{\bot})),$ such that
$$ h^{l}(X,Y) = H^{l}{g}(X,Y), \; \; h^{s}(X,Y) = H^{s}{g}(X,Y).$$
\end{defn}

\begin{thm}
Let $M$ be a totally umbilical CR-lightlike submanifold of a locally golden manifold $\overline{M}$ and screen distribution be totally geodesic. Then
$$A_{JZ}W  = A_{JW}Z, \; \forall \; W,Z \in \Gamma D'.$$
\end{thm}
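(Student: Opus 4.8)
The plan is to reduce the statement to the integrability of $D'$ and then exploit the two hypotheses. Since $\overline{M}$ is locally golden, $\overline{\nabla}J=0$, so for $W,Z\in\Gamma(D')$ one has $\overline{\nabla}_W(JZ)=J\overline{\nabla}_W Z$. I would expand the right--hand side with the Gauss formula (\ref{1e6}) and the left--hand side with the Weingarten formula (\ref{1e5b}) --- note $JZ\in\Gamma(JD')=\Gamma(L_1\perp L_2)\subset\Gamma(tr(TM))$ by Definition \ref{de2}, so $A_{JZ}$ makes sense --- and then compare the components tangent to $M$. Using that, again by Definition \ref{de2}, $J$ maps $D$ into $D\subset TM$ while it maps $D'$ into $L_1\perp L_2\subset tr(TM)$, the tangential comparison yields $-A_{JZ}W=J(S\nabla_W Z)+(\text{tangential part of }Jh(W,Z))$, where $S:TM\to D$ is the projection. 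Subtracting the same identity with $W$ and $Z$ interchanged, the $h$--terms cancel by symmetry of $h$ (total umbilicity makes this explicit via $h(W,Z)=H\,g(W,Z)$), which leaves
$$A_{JW}Z-A_{JZ}W=J\bigl(S[W,Z]\bigr).$$

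\noindent By (\ref{1e16a}) we have $J(J-I)=I$, so $J$ is invertible; hence it remains to show $S[W,Z]=0$, i.e. $[W,Z]\in\Gamma(D')$ for all $W,Z\in\Gamma(D')$, which is precisely the integrability of $D'$. Since $D'\subset S(TM)$ is $g$--orthogonal to the non-degenerate $D_0$, while $ltr(TM)$ pairs non-degenerately with $Rad(TM)$ and $J(ltr(TM))$ with $J\,Rad(TM)$ (a consequence of (\ref{1e16a})--(\ref{1e16b})), it is enough to prove $g([W,Z],X')=0$ for $X'\in\Gamma(D_0)$ and $g([W,Z],N)=g([W,Z],JN)=0$ for $N\in\Gamma(ltr(TM))$.

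\noindent For the $ltr$ and $J(ltr)$ pairings the plan is to write out $g(\overline{\nabla}_W Z,\cdot)-g(\overline{\nabla}_Z W,\cdot)$, move $J$ through using $\overline{\nabla}J=0$ and (\ref{1e16b}), and invoke the screen--totally--geodesic hypothesis: by (\ref{1e10b}) it forces $h^*=0$, so $A_N$ takes values in $Rad(TM)$ and the surviving radical terms turn out to be symmetric in $W\leftrightarrow Z$ and cancel (Lemma \ref{2l4} keeps $\nabla_W Z,\nabla_Z W$ inside $S(TM)$ throughout). The hard part will be the $D_0$--pairing. There the plan is to use that total umbilicity together with $D_0\perp D'$ forces $h(W,X')=H\,g(W,X')=0$, so $\overline{\nabla}_W X'=\nabla_W X'\in\Gamma(S(TM))$ by Lemma \ref{2l4}, and then to transfer $J$ across via $g([W,Z],X')=g(J[W,Z],(J-I)X')$ (using $J^2=J+I$), re-expressing everything in terms of the already--computed shape operators $A_{JZ}W,A_{JW}Z$ and, if needed, the Codazzi--type identities (\ref{1e10f})--(\ref{1e10g}) for the umbilical $h^l,h^s$. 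I expect this $D_0$--pairing to be the main obstacle, because a purely metric expansion of $g([W,Z],X')$ is circular: only by genuinely combining the parallelism of $J$ with the umbilical and screen hypotheses can one break the $W\leftrightarrow Z$ symmetry, conclude $S[W,Z]=0$, and hence obtain $A_{JZ}W=A_{JW}Z$ for all $W,Z\in\Gamma(D')$.
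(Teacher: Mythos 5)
Your opening coincides with the paper's: use $\overline{\nabla}J=0$ to write $J\overline{\nabla}_WZ=\overline{\nabla}_W(JZ)$, expand via the Gauss and Weingarten formulas, and antisymmetrize in $W,Z$ so the symmetric $h$-terms drop out. Your reformulation $A_{JW}Z-A_{JZ}W=J(S[W,Z])$ is correct and is actually sharper than what the paper records, since it makes explicit that the difference of shape operators lies in $J(D)$ and that the theorem is \emph{equivalent} to $S[W,Z]=0$. You have, however, mislocated the difficulty. The $D_0$-pairing you flag as ``the main obstacle'' is precisely the computation the paper carries out, and it closes without circularity: $g(\nabla_WZ,JX)=g(\overline{\nabla}_WZ,JX)=-g(JZ,\overline{\nabla}_WX)$, where $h(W,X)=H\,g(W,X)=0$ by umbilicity because $W\in\Gamma(D')$ and $X\in\Gamma(D_0)$ are orthogonal, so $\overline{\nabla}_WX=\nabla_WX\in\Gamma(S(TM))$ by Lemma \ref{2l4}, while $JZ\in\Gamma(L_1\perp L_2)$ is $g$-orthogonal to $S(TM)$; hence the term vanishes and the $D_0$-component of $[W,Z]$ dies.

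The genuine gap is the $J\,Rad(TM)$-component of $[W,Z]$, which you propose to kill by pairing with $J(ltr(TM))$. That pairing is circular: $g([W,Z],JN)=g(J[W,Z],N)$, and since the transversal parts of $\overline{\nabla}_W(JZ)$ and $\overline{\nabla}_Z(JW)$ are $g$-orthogonal to $N$, one finds $g([W,Z],JN)=-g(A_{JZ}W-A_{JW}Z,N)$ --- exactly the $ltr(TM)$-pairing of the quantity you are trying to prove vanishes. Your claim that ``the surviving radical terms turn out to be symmetric in $W\leftrightarrow Z$ and cancel'' therefore assumes part of the conclusion; note that $h^*=0$ via (\ref{1e10b}) only forces $A_NX\in\Gamma(Rad(TM))$, i.e.\ it controls the $S(TM)$-components of $A_N$, not the radical pairing $g(A_NX,N')$ that is at stake here. (The $ltr$-pairing itself is harmless: Lemma \ref{2l4} puts $[W,Z]$ in $\Gamma(S(TM))$, so the $Rad(TM)$-component is already zero.) To be fair, the paper's own proof has the same hole: it establishes $g(A_{JZ}W-A_{JW}Z,X)=0$ only for $X\in\Gamma(D_0)$ and then invokes non-degeneracy of $D_0$, which would suffice only if the difference were known to lie in $D_0$; by your own identity it lies in $J(J\,Rad(TM)\perp D_0)$ and may have $Rad(TM)$ and $J\,Rad(TM)$ parts. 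So your plan correctly exposes the missing case but does not close it; a complete argument needs an independent proof that $[W,Z]$ has no $J\,Rad(TM)$-component (equivalently, that $A_{JZ}W-A_{JW}Z$ is $g$-orthogonal to $ltr(TM)$).
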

\begin{proof}
Since $\overline{M}$ be a golden semi-Riemannian manifold, therefore 
$$J\overline{\nabla}_{Z}W = \overline{\nabla}_{Z}JW. $$	
Using (\ref{1e5a}) and (\ref{1e5b}), we have
\begin{eqnarray}
\nonumber J(\nabla_{Z}W + h(Z,W)) & = &- A_{JW}Z + \nabla_{Z}^{t}JW \\
 J(\nabla_{Z}W) + J(h(Z,W)) & = & - A_{JW}Z + \nabla_{Z}^{t}JW.\label{4e1}
\end{eqnarray} 
Interchange $Z$ and $W$ in the above equation, we have 
\begin{equation} \label{4e2}
	J(\nabla_{W}Z) + J(h(W,Z)) = - A_{JZ}W + \nabla_{W}^{t}JZ.	
\end{equation}
Substract equations (\ref{4e1}) and (\ref{4e2}), we get
\begin{eqnarray}
\nonumber J(\nabla_{Z}W) - J(\nabla_{W}Z) + J(h(Z,W)) - J(h(W,Z)) & = & A_{JZ}W - A_{JW}Z + \nabla_{Z}^{t}JW  \\ && \nonumber - \nabla_{W}^{t}JZ\\
\nonumber J(\nabla_{Z}W) - J(\nabla_{W}Z) & = & A_{JZ}W - A_{JW}Z + \nabla_{Z}^{t}JW \\ && \nonumber - \nabla_{W}^{t}JZ \\
J(\nabla_{Z}W - \nabla_{W}Z) - \nabla_{Z}^{t}JW + \nabla_{W}^{t}JZ & = & A_{JZ}W - A_{JW}Z. \label{4e3}
\end{eqnarray}
Taking inner product with $X \in \Gamma (D_{0})$ in (\ref{4e3}), we have 
\begin{eqnarray} 
\nonumber {g}(J\nabla_{Z}W, X) - {g}(J(\nabla_{W}Z,X) - {g}(\nabla_{W}^{t}JZ,X)-{g}(\nabla_{W}^{t}JZ,X) & = &{g}(A_{JZ}W,X) \\ && \nonumber -{g}(A_{JW}Z,X)\\
\nonumber {g}(J(\nabla_{W}Z,X) - {g}(J(\nabla_{W}Z,X) & = & {g}(A_{JZ}W,X) \\ && \nonumber -{g}(A_{JW}Z,X).
\end{eqnarray}
Using (\ref{1e16b}), we get
\begin{eqnarray}
\nonumber {g}(\nabla_{W}Z, JX) - {g}(\nabla_{W}Z, JX) & = & {g}(A_{JZ}W,X)-{g}(A_{JW}Z,X)\\
{g}(A_{JZ}W - A_{JW}Z, X) & = & {g}(\nabla_{W}Z, JX) - {g}(\nabla_{W}Z, JX).\label{4e4}
\end{eqnarray}
Now, 
\begin{eqnarray}
\nonumber {g}(\nabla_{W}Z, JX) & = & {g}(\overline{\nabla}_{W}Z - h(W,Z), JX) \\
\nonumber {} & = & {g}(\overline{\nabla}_{W}Z, JX) - {g}(h(W,Z), JX),\\
\nonumber {} & = & {g}(\overline{\nabla}_{W}Z, JX)\\
\nonumber {} & = & - {g}(Z, {\nabla}_{W}JX)\\
\nonumber {g}(\nabla_{W}Z, JX) & = & - {g}(Z, (\overline{\nabla}_{W}J)X - J(\overline{\nabla}_{W}X)).
\end{eqnarray}
Since $J$ is parallel to $\overline{\nabla},$ i.e. $\overline{\nabla}_{X}J = 0$ 
$$ {g}(\nabla_{W}Z, JX)  =  - J(\overline{\nabla}_{W}X)).  $$ 
Using (\ref{1e6}), we have
\begin{eqnarray}
\nonumber {g}(\nabla_{W}Z, JX) & =  &- {g} (JZ, \nabla_{W}X + h^{s}(W,X) + h^{l}(W,X))\\
 {g}(\nabla_{W}Z, JX) & = & - {g} (JZ, \nabla_{W}X) - {g}(JZ, h^{s}(W,X))  - {g}(JZ, h^{l}(W,X)). \label{4e5}
\end{eqnarray}
Since $M$ is a totally umbilical CR-lightlike submanifold and screen distribution is totally geodesic, then 
$$h^{s}(W,X) = H^{s}{g}(W,X) = 0 $$ and 
$$h^{l}(W,X) = H^{l}{g}(W,X) = 0, $$
where $W \in \Gamma (D')$ and $X \in \Gamma (D_{0}).$ \\
From (\ref{4e5}), we have 
$$ {g}(\nabla_{W}Z, JX)  =  - {g} (JZ, \nabla_{W}X). $$
From Lemma \ref{2l4}, we get
$$ {g}(\nabla_{W}Z, JX)  = 0. $$
Similarly, $$ {g}(\nabla_{Z}W, JX)  = 0 $$
Using (\ref{4e4}), we have 
$$  g(A_{JZ}W - A_{JW}Z, X) = 0. $$
Since $D_{0}$ is non-degenerate, then 
$$ A_{JZ}W - A_{JW}Z = 0 $$
$$ \Rightarrow A_{JZ}W = A_{JW}Z. $$
\end{proof}
\begin{defn}
For a CR-lightlike submanifold $M,$ a plane $X\wedge Z$ with $X \in \Gamma (D_{0})$ and $Z \in \Gamma (D')$ is called a CR-lightlike section. The sectional curvature $\overline{K} (\pi)$ of a CR-lightlike section $\pi$ is called CR-lightlike sectional curvature.
\end{defn}
\begin{thm}
Let $M$ be a totally umbilical CR-lightlike submanifold $ M$ of a golden manifold $\overline{M}.$ Then, the CR-lightlike sectional curvature of $M$ vanishes, that is $\overline{K}(\pi)=0$ for all CR-lightlike section $\pi.$
\end{thm}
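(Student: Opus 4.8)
The plan is to reduce the claim to the vanishing of a single ambient-curvature component and then to extract that component from a Gauss--Weingarten expansion in which the locally golden hypothesis and total umbilicity conspire to kill every surviving term. Throughout, $\overline{R}$ denotes the curvature tensor of $\overline{\nabla}$. Fix a CR-lightlike section $\pi=X\wedge Z$ with $X\in\Gamma(D_{0})$ and $Z\in\Gamma(D')$. Because $S(TM)=\{J(\mathrm{Rad}\,TM)\oplus D'\}\bot D_{0}$, the distributions $D_{0}$ and $D'$ are orthogonal, so $g(X,Z)=0$; as the section is non-degenerate we get $\overline{K}(\pi)=\overline{g}(\overline{R}(X,Z)Z,X)\big/\big(g(X,X)\,g(Z,Z)\big)$, and it is enough to show $\overline{g}(\overline{R}(X,Z)Z,X)=0$.

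The golden structure is brought in as follows. From $\overline{\nabla}J=0$ one gets $\overline{R}(X,Z)(JZ)=J\big(\overline{R}(X,Z)Z\big)$; since (\ref{1e16a}) gives $J^{2}=J+I$, the tensor $J$ is invertible with $J^{-1}=J-I$, and combining this with the compatibility (\ref{1e16b}), $g(JU,V)=g(U,JV)$, yields the identity $\overline{g}(\overline{R}(X,Z)Z,X)=\overline{g}\big(\overline{R}(X,Z)(JZ),\,(J-I)X\big)$. This reformulation is the useful one because $Z\in\Gamma(D')$ forces $JZ\in\Gamma(JD')=\Gamma(L_{1}\bot L_{2})\subset\Gamma(tr(TM))$, while $(J-I)X\in\Gamma(D_{0})\subset\Gamma(TM)$; thus on the right $\overline{R}$ is applied to a transversal vector and paired against a tangential one, and $\overline{\nabla}_{\bullet}(JZ)=J\overline{\nabla}_{\bullet}Z$ may be combined with the Weingarten formulas (\ref{1e7})--(\ref{1e8}).

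Next I would expand $\overline{R}(X,Z)(JZ)=\overline{\nabla}_{X}\overline{\nabla}_{Z}(JZ)-\overline{\nabla}_{Z}\overline{\nabla}_{X}(JZ)-\overline{\nabla}_{[X,Z]}(JZ)$, substitute $\overline{\nabla}_{\bullet}(JZ)=J\overline{\nabla}_{\bullet}Z$, and feed in total umbilicity: $h(X,Z)=H\,g(X,Z)=0$ since $g(X,Z)=0$, so by (\ref{1e6}) $\overline{\nabla}_{X}Z=\nabla_{X}Z$, whereas $\overline{\nabla}_{Z}Z=\nabla_{Z}Z+g(Z,Z)H^{l}+g(Z,Z)H^{s}$. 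Differentiating once more with (\ref{1e6})--(\ref{1e8}) and then pairing with the tangential field $(J-I)X$, all purely transversal summands are annihilated and what is left is a combination of shape-operator terms ($A_{H^{l}}$, $A_{H^{s}}$, and $A_{N}$, $A_{W}$ for the $L_{1}$- and $L_{2}$-parts $N,W$ of $JZ$) together with terms of the type $f\nabla_{\bullet}\bullet$; applying the umbilical identities $h^{l}=H^{l}g$, $h^{s}=H^{s}g$ once more, together with $g(JU,V)=g(U,JV)$ and $J^{-1}=J-I$, should make these cancel in pairs, giving $\overline{g}(\overline{R}(X,Z)Z,X)=0$. Since $\pi$ was arbitrary, $\overline{K}(\pi)=0$ for every CR-lightlike section, which is the assertion.

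The step I expect to be the genuine obstacle is the last one: carefully separating the $TM$- and $tr(TM)=ltr(TM)\bot S(TM^{\bot})$-components of the iterated covariant derivatives and verifying that, after imposing $h=Hg$, the truly tangential remainder — which a priori still drags in the screen second fundamental form $h^{*}$ via (\ref{1e10b}) — really does collapse. This is exactly where the golden relations $\overline{\nabla}J=0$, $J^{2}=J+I$ and $g(JU,V)=g(U,JV)$ have to be used in concert rather than one at a time: without the golden parallelism the conclusion is false, so the bookkeeping must, in the end, consume it.
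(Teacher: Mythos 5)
Your opening reduction is sound as far as it goes: $\overline{\nabla}J=0$ gives $\overline{R}(X,Z)(JZ)=J\overline{R}(X,Z)Z$, the relation (\ref{1e16a}) gives $J^{-1}=J-I$, and (\ref{1e16b}) then yields $\overline{g}(\overline{R}(X,Z)Z,X)=\overline{g}(\overline{R}(X,Z)(JZ),(J-I)X)$ with $JZ\in\Gamma(tr(TM))$ and $(J-I)X\in\Gamma(D_{0})$. But the argument stops exactly where the content of the theorem begins. Everything now rests on the claim that, after expanding with (\ref{1e6})--(\ref{1e8}) and imposing $h=Hg$, the surviving tangential terms ``cancel in pairs''; you never exhibit this cancellation, and you yourself flag it as the genuine obstacle. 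Nothing in your setup forces it: total umbilicity constrains $h$, not the shape operators $A_{N}$, $A_{W}$ attached to the $L_{1}$- and $L_{2}$-components of $JZ$, and the screen second fundamental form $h^{*}$ that you correctly note will intrude via (\ref{1e10b}) is not controlled by the hypotheses at all. As written this is a plan with its decisive step missing, so it does not establish the theorem.

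The paper closes this gap by a different mechanism, which is worth internalizing: umbilicity is used at the level of the \emph{covariant derivative} of the second fundamental form rather than of $h$ itself. Differentiating $h^{l}=H^{l}g$ and $h^{s}=H^{s}g$ via (\ref{1e10f}) and (\ref{1e10g}), and using $(\nabla_{X}g)(Y,Z)=0$ from (\ref{1e10e}), one gets $(\nabla_{X}h^{l})(Y,Z)=g(Y,Z)\nabla^{l}_{X}H^{l}$ and $(\nabla_{X}h^{s})(Y,Z)=g(Y,Z)\nabla^{s}_{X}H^{s}$, so the transversal (Codazzi) part of $\overline{R}(X,Y)Z$ becomes a sum of terms each carrying a coefficient $g(Y,Z)$ or $g(X,Z)$ multiplied by a transversal field ($\nabla^{l}H^{l}$, $\nabla^{s}H^{s}$, $D^{l}$, $D^{s}$). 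On a CR-lightlike section the coefficient $g(X,Z)$ vanishes because $D_{0}\perp D'$ in $S(TM)$, and the remaining terms pair transversal vectors against screen vectors and hence also vanish; this is what kills $\overline{R}(X,Z,X,Z)$ and $\overline{R}(X,Z,JX,JZ)$ simultaneously. If you want to salvage your route, you must either carry out the full Gauss--Weingarten expansion of $\overline{g}(\overline{R}(X,Z)(JZ),(J-I)X)$ term by term, or switch to the Codazzi formulation, where the $g(X,Z)=0$ factors do the cancelling for you.
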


\begin{proof}
Since, $M$ is a totally umbilical CR-lightlike submanifold of $\overline{M}$ then from (\ref{1e10f}) and (\ref{1e10g}), we have 
\begin{eqnarray}
(\nabla_{X}h^{l})(Y,Z) = g(Y,Z)\nabla^{l}_{X}H^{l}-H^{l}\{ (\nabla_{X}g)(Y,Z)\}\label{4e6},\\
(\nabla_{X}h^{s})(Y,Z) = g(Y,Z)\nabla^{s}_{X}H^{s}-H^{s}\{ (\nabla_{X}g)(Y,Z)\}\label{4e7}.
\end{eqnarray}
For a CR-lightlike section $\pi = X\wedge Z, \; X \in \Gamma (D_{0}), Z \in \Gamma (D'). $\\
From (\ref{1e10e}), we have $(\nabla_{X}g)(Y,Z)= 0.$ Therefore, from (\ref{4e6})  and (\ref{4e7}), we get
\begin{eqnarray}
(\nabla_{X}h^{l})(Y,Z) = g(Y,Z)\nabla^{l}_{X}H^{l}\label{4e8},\\
(\nabla_{X}h^{s})(Y,Z) = g(Y,Z)\nabla^{s}_{X}H^{s}\label{4e9}.
\end{eqnarray}
Now, from (\ref{4e8}) and (\ref{4e9}), we get 
\begin{eqnarray}
\nonumber \{\overline{R}(X,Y)Z \}^{tr} & = & g(Y,Z)\nabla^{l}_{X} H^{l}- g(X,Z)\nabla^{l}_{Y} H^{l} + g(Y,Z)D^{l}(X,H^{s}) \\ && 
\nonumber - g(X,Z)D^{l}(Y,H^{s})+g(Y,Z)\nabla^{s}_{X} H^{s}-g(X,Z)\nabla^{s}_{Y}H^{s} \\ && 
 +g(Y,Z)D^{s}(X,H^{l})-g(X,Z)D^{s}(Y,H^{l}).\label{4e10}
\end{eqnarray}
For any $V \in \Gamma (tr(TM))$, from equation (\ref{4e10}) we get
\begin{eqnarray}
\nonumber \overline{R}(X,Y,Z,V ) & = & g(Y,Z){g}(\nabla^{l}_{X} H^{l}, V)- g(X,Z){g}(\nabla^{l}_{Y} H^{l}, V) + \\ && 
\nonumber   g(Y,Z){g}(D^{l}(X,H^{s}), V) - g(X,Z){g}(D^{l}(Y,H^{s}), V)+  \\ && 
\nonumber  g(Y,Z) {g}(\nabla^{s}_{X} H^{s}, V)-g(X,Z){g}(\nabla^{s}_{Y}H^{s}, V) \\ && \nonumber  +g(Y,Z){g}(D^{s}(X,H^{l}), V)- g(X,Z){g}(D^{s}(Y,H^{l}, V).\label{4e11} 
\end{eqnarray}
\begin{eqnarray}
	\nonumber \overline{R}(X,Z,X,Z ) & = & g(Z,X){g}(\nabla^{l}_{X} H^{l}, Z)- g(X,X){g}(\nabla^{l}_{Z} H^{l}, Z)  \\ && 
	\nonumber + g(Z,X){g}(D^{l}(X,H^{s}), Z) - g(X,X){g}(D^{l}(Z,H^{s}), Z)+  \\ && 
\nonumber g(Z,X) {g}(\nabla^{s}_{X} H^{s}, Z)-g(X,X){g}(\nabla^{s}_{Z}H^{s}, Z)	 \\ && 
\nonumber  +g(Z,X){g}(D^{s}(X,H^{l}), Z)- g(X,X){g}(D^{s}(Z,H^{l}, Z).\label{4e12}
\end{eqnarray}
\begin{eqnarray}
\nonumber {R}(X,Z,JX, JZ ) & = & g(Z, JX){g}(\nabla^{l}_{X} H^{l}, JZ)- g(X,JX){g}(\nabla^{l}_{Z} H^{l}, JZ) + \\ && 
\nonumber g(Z,JX){g}(D^{l}(X,H^{s}), JZ)  
 - g(X,JX){g}(D^{l}(Z,H^{s}), JZ)+ \\ && 
\nonumber g(Z, JX) {g}(\nabla^{s}_{X} H^{s},JZ)-g(X,JX){g}(\nabla^{s}_{Z}H^{s}, JZ) + \\ &&
\nonumber g(Z, JX){g}(D^{s}(X,H^{l}), JZ)- g(X, JX){g}(D^{s}(Z,H^{l}, JZ). \label{4e13}	
\end{eqnarray}

For any unit vectors $X \in \Gamma (D)$ and $Z \in \Gamma (D'),$ we have 
$$ \overline{R}(X, Z, JX, JZ) = \overline{R}(X, Z, X, Z) = 0. $$ 
Since, $$K(\gamma) = K_{N}(X \wedge Y) = g(\overline{R}(X,Y)Y, X),$$
where $$\overline{R}(X, Z, X, Z) = g(\overline{R}(X,Z)X, Z) $$
or,
$$ \overline{R}(X, Z, JX, JZ) = g(\overline{R}(X,Z)JX, JZ)  $$
i.e. $$\overline{K}(\pi) = 0 $$ for all CR-sections $\pi.$

\section{Example}

{\bf Example 5.1 } We consider a semi-Riemannian manifold  $R_{2}^{6}$ of signature $(-, -, +, +, +, +) $ with respect to the canonical basis $(\frac{\partial}{\partial x_{1}}, \frac{\partial}{\partial x_{2}}, \frac{\partial}{\partial x_{3}}, \frac{\partial}{\partial x_{4}}, \frac{\partial}{\partial x_{5}} , \frac{\partial}{\partial x_{6}} ) $ and a submanifold $M$ of codimension $2$ in $R_{2}^{6}$ given by equations 
$$ x_{5} = x_{1}cos\alpha - x_{2}sin\alpha - x_{3}x_{4} tan\alpha $$
$$ x_{6} = x_{1}sin\alpha - x_{2}cos\alpha - x_{3}x_{4}, $$
where $\alpha \in R - \{ \frac{\pi}{2} + k\pi; \ k \in z \}.$ The structure on $R^{6}_{2}$ is defined by 
$$ J ( \frac{\partial}{\partial x_{1}}, \frac{\partial}{\partial x_{2}}, \frac{\partial}{\partial x_{3}}, \frac{\partial}{\partial x_{4}}, \frac{\partial}{\partial x_{5}} , \frac{\partial}{\partial x_{6}})  = (\overline{\phi} \ \frac{\partial}{\partial x_{1}}, \overline{\phi} \frac{\partial}{\partial x_{2}}, \phi \frac{\partial}{\partial x_{3}}, \phi \frac{\partial}{\partial x_{4}}, \phi \frac{\partial}{\partial x_{5}}, \phi \frac{\partial}{\partial x_{6}} ).$$ 

Now,
$$ J^{2} ( \frac{\partial}{\partial x_{1}}, \frac{\partial}{\partial x_{2}}, \frac{\partial}{\partial x_{3}}, \frac{\partial}{\partial x_{4}}, \frac{\partial}{\partial x_{5}} , \frac{\partial}{\partial x_{6}})  = ((\overline{\phi}+ 1) \ \frac{\partial}{\partial x_{1}}, (\overline{\phi} + 1) \frac{\partial}{\partial x_{2}}, (\phi + 1) \frac{\partial}{\partial x_{3}}, (\phi + 1) \frac{\partial}{\partial x_{4}}, $$ $$ ( \phi + 1) \frac{\partial}{\partial x_{5}}, ( \phi + 1) \frac{\partial}{\partial x_{6}} )   $$ 
$$ J^{2} ( \frac{\partial}{\partial x_{1}}, \frac{\partial}{\partial x_{2}}, \frac{\partial}{\partial x_{3}}, \frac{\partial}{\partial x_{4}}, \frac{\partial}{\partial x_{5}} , \frac{\partial}{\partial x_{6}})  = J ( \frac{\partial}{\partial x_{1}}, \frac{\partial}{\partial x_{2}}, \frac{\partial}{\partial x_{3}}, \frac{\partial}{\partial x_{4}}, \frac{\partial}{\partial x_{5}} , \frac{\partial}{\partial x_{6}}) \  + \  ( \frac{\partial}{\partial x_{1}}, \frac{\partial}{\partial x_{2}}, $$ $$ \frac{\partial}{\partial x_{3}}, \frac{\partial}{\partial x_{4}}, \frac{\partial}{\partial x_{5}} , \frac{\partial}{\partial x_{6}}).$$ 
  $$ J^{2} = J + I. $$ 
  It follows that $( R_{2}^{6}, J  )$ is a golden semi-Reimannian manifold. \\

The tangent bundle $TM$ is spanned by
$$ X_{0} = - sin\alpha \ \frac{\partial}{\partial x_{5}} -  cos\alpha \ \frac{\partial}{\partial x_{6}} - \phi \ \frac{\partial}{\partial x_{2}}, $$  
$$ X_{1} = - \phi \ sin\alpha \ \frac{\partial}{\partial x_{5}} -  \phi \ cos\alpha \ \frac{\partial}{\partial x_{6}} +   \ \frac{\partial}{\partial x_{2}}, $$  
$$ X_{2} = \frac{\partial}{\partial x_{5}} - \overline{\phi} \ sin\alpha \ \frac{\partial}{\partial x_{2}} + \frac{\partial}{\partial x_{1}}, $$ 
$$ X_{3} = - \overline{\phi} \ cos\alpha \ \frac{\partial}{\partial x_{2}} + \frac{\partial}{\partial x_{4}} + i \frac{\partial}{\partial x_{6}}. $$

Thus, $M$ is a 1-lightlike submanifold of $R_{2}^{6}$ with $RadTM = Span\{X_{0}\}$. Using golden structure of $R_{2}^{6},$ we obtain that $X_{1} = J(X_{0}).$ Thus $J(RadTM)$ is a distribution on $M.$ Hence $M$ is a 
CR-lightlike submanifold. 
\vspace{0.45cm}

\noindent
{\bf Acknowledgement.} All authors would like to thank Integral University, Lucknow, India for providing financial support under the seed money project IUL/IIRC/SMP/2021/010  to the present work (manuscript number IU/R$\&$D/2022-MCN0001708).\\

\vspace{.45cm}
\parindent=0mm

 \end{proof}

\end{document}